\documentclass
[11pt]{amsart}

\usepackage[USenglish]{babel}
\usepackage[T1]{fontenc} 
\usepackage[utf8,latin1]{inputenc}

\usepackage{amsmath,amsthm,amssymb,amsfonts,mathrsfs}
\usepackage{dsfont}
\usepackage{mathtools}

\DeclareMathAlphabet{\mathbbold}{U}{bbold}{m}{n} 

\usepackage{enumitem}
\usepackage{nicematrix}
\usepackage{lmodern}
\usepackage[all]{xy}

\usepackage[bookmarks=true]{hyperref}
\usepackage{xcolor}
\hypersetup{
    colorlinks,
    linkcolor={red!50!black},
    citecolor={blue!50!black},
    urlcolor={blue!80!black},
}

\usepackage[bottom=3cm, top=3cm, left=3.5cm, right=3.5cm]{geometry}

\mathtoolsset{showonlyrefs}  

\newcommand{\R}{\mathbb{R}}
\newcommand{\Z}{\mathcal{Z}}
\newcommand{\A}{\mathcal{A}}

\newcommand{\D}{\mathcal{D}}

\newcommand{\sr}{sub-Riemannian }
\newcommand{\ariem}{almost-Riemannian }

\newcommand{\surf}{\Sigma}
\newcommand{\srexp}{\mathrm{exp}}

\newcommand{\zero}{\mathbf{0}}
\newcommand{\eps}{\varepsilon}
\newcommand{\di}{\mathsf d}
\newcommand{\de}{{\mathrm d}}
\newcommand{\mres}{\mathbin{\vrule height 1.6ex depth 0pt width
0.13ex\vrule height 0.13ex depth 0pt width 1.3ex}}
\newcommand{\ord}{\mathrm{ord}}
\newcommand{\rest}[2]{{{\rm restr}_{#1}^{#2}}}
\DeclareMathOperator{\Geo}{Geo}
\newcommand{\p}{\mathtt p}
\newcommand{\Prob}{\mathscr{P}}
\newcommand{\Leb}{\mathscr{L}}

\newcommand{\cd}{{\sf CD}}
\newcommand{\m}{\mathfrak{m}}
\newcommand{\q}{\mathfrak{q}}
\newcommand{\f}{\mathfrak{f}}
\newcommand{\h}{\mathfrak{h}}
\newcommand{\Q}{\mathfrak{Q}}

\DeclareMathOperator{\spn}{span}

\theoremstyle{plain}
\newtheorem{thm}{Theorem}[section]

\newtheorem{lem}[thm]{Lemma}
\newtheorem*{lem*}{Lemma}
\newtheorem{prop}[thm]{Proposition}

\theoremstyle{definition}
\newtheorem{defn}[thm]{Definition}
\newtheorem{example}[thm]{Example}

\theoremstyle{remark}
\newtheorem{rmk}[thm]{Remark}

\setcounter{tocdepth}{1} 

\author[M. Magnabosco]{Mattia Magnabosco}
\author[T. Rossi]{Tommaso Rossi}
\address{Institut f\"ur Angewandte Mathematik, Universit\"at Bonn, Bonn, Germany}
\email{\href{mailto:magnabosco@iam.uni-bonn.de}{magnabosco@iam.uni-bonn.de}}
\email{\href{mailto:rossi@iam.uni-bonn.de}{rossi@iam.uni-bonn.de}}

\title[Almost-Riemannian manifolds do not satisfy the \texorpdfstring{$\cd$}{CD} condition]{Almost-Riemannian manifolds do not satisfy the curvature-dimension condition}


\begin{document}

\begin{abstract}
The Lott-Sturm-Villani curvature-dimension condition $\cd(K,N)$ provides a synthetic notion for a metric measure space to have curvature bounded from below by $K$ and dimension bounded from above by $N$. It was proved by Juillet in \cite{MR4201410} that a large class of \sr manifolds do not satisfy the $\cd(K,N)$ condition, for any $K\in\R$ and $N\in(1,\infty)$. However, his result does not cover the case of almost-Riemannian manifolds. In this paper, we address the problem of disproving the $\cd$ condition in this setting, providing a new strategy which allows us to contradict the one-dimensional version of the $\cd$ condition. In particular, we prove that $2$-dimensional almost-Riemannian manifolds and strongly regular almost-Riemannian manifolds do not satisfy the $\cd(K,N)$ condition for any $K\in\R$ and $N\in(1,\infty)$.\vspace{4pt}\\
\textbf{Keywords}: sub-Riemannian geometry, almost-Riemannian manifolds, curvature-dimension condition. \vspace{4pt} \\
\textbf{AMS subject classifications}: 53C17, 53C22, 49J52, 53C23. \vspace{4 pt} \\
\textbf{Data availability statement}: All data generated or analysed during this study are included in this published article.
\end{abstract}

\maketitle 

\tableofcontents

\section{Introduction}
In their seminal works Lott-Villani \cite{MR2480619} and Sturm \cite{MR2237206,MR2237207} introduced a synthetic notion of curvature-dimension bounds, which is heavily based on the theory of Optimal Transport. They noticed that, in a Riemannian manifold, a uniform lower bound on the Ricci curvature, together with an upper bound on the dimension, is equivalent to a convexity property of the R\'enyi entropy functionals in the Wasserstein space. This allowed them to define a consistent notion of curvature-dimension bounds for metric measure spaces, known as $\cd$ condition. While in the Riemannian setting, the $\cd$ condition is equivalent to having bounded geometry, an analogue result does not hold in the \sr setting. Sub-Riemannian geometry is a far-reaching generalization of Riemannian geometry: given a smooth manifold $M$, we define a smoothly varying scalar product only on a subset of \emph{horizontal} directions $\D_p\subset T_pM$ (called distribution) at each point $p\in M$. Under the so-called H\"ormander condition, $M$ is horizontally-path connected, and the usual length-minimization procedure yields a well-defined distance $\di$. In particular, differently from what happens in Riemannian geometry, the rank of the distribution $r(p)=\dim \D_p$ may be strictly less than the dimension of the manifold and may vary with the point. In general, we can not expect the $\cd$ condition to hold for \emph{truly} \sr manifolds. This statement is confirmed by the following result by Juillet.

\begin{thm}[{\cite[Cor.\ 1.2]{MR4201410}}]
\label{thm:juillet}
Let $M$ be a complete \sr manifold with $ \dim M \geq 3$, equip\-ped with a smooth positive (i.e.\ with strictly positive density) measure $\m$. Assume that the possibly varying rank of the distribution is smaller than $\dim M-1$. Then, $(M,\di,\m)$ does not satisfy the $\cd(K,N)$ condition for any $K\in\R$ and $N\in(1,\infty)$.
\end{thm}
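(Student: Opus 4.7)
My plan is to contradict the Brunn--Minkowski-type inequality that follows from $\cd(K,N)$. Recall that, under $\cd(K,N)$, for every pair of Borel sets $A_0, A_1 \subset M$ with $0 < \m(A_i) < \infty$ and every $t \in (0,1)$, the set $A_t$ of $t$-intermediate points of optimal geodesics from $A_0$ to $A_1$ satisfies $\m(A_t)^{1/N} \geq \tau_{K,N}^{(1-t)}(\Theta)\,\m(A_0)^{1/N} + \tau_{K,N}^{(t)}(\Theta)\,\m(A_1)^{1/N}$, where $\Theta$ bounds the transport distance. I would aim to exhibit $(A_0, A_1)$ for which $\m(A_{1/2}) < \m(A_0) = \m(A_1)$: this contradicts the $K = 0$, $t = 1/2$ instance of the inequality for every $N \in (1,\infty)$, and, by rescaling to small scales (where $\tau_{K,N}^{(t)}(\Theta) \to t$), also the inequality for general $K$.

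\textbf{Local reduction via the nilpotent approximation.} The construction is local around any regular point $p \in M$. In privileged coordinates adapted to the flag $\D \subset \D^2 \subset \cdots$ at $p$, the sub-Riemannian structure is modeled to leading order by its nilpotent tangent cone $\widehat{G}_p$, a Carnot group of topological dimension $n = \dim M$ and homogeneous dimension $Q = \sum_i i (n_i - n_{i-1})$, where $n_i = \dim \D^i(p)$. Since $r(p) = n_1 \leq n-2$, at least two directions are generated only through Lie brackets, so $Q \geq n + 2$. The Carnot group $\widehat{G}_p$ admits anisotropic dilations $\delta_\lambda$ that are honest sub-Riemannian homotheties of ratio $\lambda$ and rescale Lebesgue measure by $\lambda^Q$. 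Because $\cd(K,N)$ is stable under pointed measured Gromov--Hausdorff convergence, and the rescaled spaces $(M, \lambda^{-1}\di, \lambda^{-Q}\m)$ converge to $(\widehat{G}_p, \hat\di, \hat\m)$ as $\lambda \to 0^+$, it suffices to falsify Brunn--Minkowski in $\widehat{G}_p$.

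\textbf{The violating configuration in $\widehat{G}_p$.} In the Carnot group I would choose $A_0$ and $A_1$ as small ``thin slabs'' symmetric with respect to $p$, elongated along a horizontal direction and thin in the higher-stratum coordinates. Using the explicit parametrization of Carnot geodesics via Pontryagin's maximum principle, one can check that the optimal geodesics connecting points of $A_0$ to points of $A_1$ are forced to bend in such a way that their midpoints contract strictly in the higher-stratum (``vertical'') components while moving only mildly in the horizontal ones. The resulting midpoint set $A_{1/2}$ has measure strictly smaller than $\m(A_0) = \m(A_1)$, contradicting Brunn--Minkowski. Since the configuration is scale-invariant under $\delta_\lambda$, the failure persists for every $N \in (1,\infty)$, and no choice of $N$ can rescue $\cd(K,N)$.

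\textbf{Main obstacle.} The technical core is to turn this qualitative picture into a \emph{quantitative} bound. One must identify an explicit pair $(A_0, A_1)$ for which \emph{every} optimal geodesic in the supporting transport plan actually contracts the vertical directions (and not merely a generic one), which requires a careful case analysis of the sub-Riemannian wavefront and cut locus in a general Carnot group of step $\geq 2$. A secondary, but nontrivial, difficulty is the transfer from $\widehat{G}_p$ back to $M$: one must uniformly control the discrepancy between true sub-Riemannian geodesics of $M$ and their nilpotent approximations on small configurations, which hinges on regularity properties of the sub-Riemannian exponential map and of optimal transport in the sub-Riemannian setting.
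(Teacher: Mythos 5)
The theorem is Juillet's and the paper does not reprove it; it only records (in the Introduction, around equation~\eqref{eq:bm_ineq}) the structure of Juillet's argument, against which your proposal should be measured. Your high-level strategy --- contradicting Brunn--Minkowski by exhibiting sets whose intermediate set loses measure, using the nilpotent approximation to see this --- is indeed the same as Juillet's, and your observation that rank $\le \dim M - 2$ forces the homogeneous dimension of the tangent Carnot group to satisfy $Q \geq \dim M + 2$ is the right entry point. But there are two substantive issues.

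First, the key quantitative mechanism is misidentified. You describe the midpoint contraction as coming from geodesics that ``bend'' and compress ``thin slabs'' in the vertical directions, and you propose a case analysis of the cut locus to nail it down. What actually drives Juillet's estimate is not a geometric bending of individual geodesics but the Jacobian asymptotics of the midpoint (more generally, $t$-intermediate) map near the diagonal: for a small set $B$ near $p$, the measure of $Z_t(\{p\},B)$ scales like $t^{\mathcal{N}}\m(B)$, where $\mathcal{N}$ is the \emph{geodesic dimension}, which strictly exceeds $Q$ (hence exceeds $\dim M$) as soon as the step is $\geq 2$. The factor $2^{-(\mathcal{N}-\dim M)}$ in \eqref{eq:bm_ineq} is exactly this Jacobian contraction at $t=1/2$. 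Your sketch never introduces the geodesic dimension nor the differential of the intermediate-point map, and ``thin slabs elongated along a horizontal direction'' is not the configuration that exhibits the contraction --- small metric balls (so that the Jacobian asymptotics apply uniformly) are. As it stands, the step from ``$Q \geq n+2$'' to ``$\m(A_{1/2}) < \m(A_0)$'' is a genuine gap, and the heuristic you give for filling it points in a slightly wrong direction.

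Second, the route through pointed measured Gromov--Hausdorff stability of $\cd(K,N)$ is an extra layer that Juillet does not use and that creates its own difficulties: the convergence $(M,\lambda^{-1}\di,\lambda^{-Q}\m)\to(\widehat{G}_p,\hat\di,\hat\m)$ holds at regular points, but you would still need to verify that the specific Brunn--Minkowski violation in $\widehat{G}_p$ survives the limit (the intermediate sets $Z_t(A_0,A_1)$ are not stable under measured GH convergence without extra care about geodesic uniqueness and plan convergence). Juillet sidesteps this by constructing $A,B$ directly in $M$ at a small scale, where privileged coordinates give uniform control over the discrepancy with the nilpotent model. Working directly in $M$ with the Jacobian estimate is both closer to the actual proof and avoids the pmGH subtleties you flag in your ``main obstacle'' paragraph.
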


\noindent While this result is quite general, it does not include many cases of interest, such as \emph{almost-Riemannian geometry}. Roughly speaking, an \ariem manifold is a \sr manifold where the rank of the distribution coincides with the dimension of $M$, at almost every point\footnote{But not at every point, otherwise the structure would be Riemannian.}. For this reason, the technique used to prove Theorem \ref{thm:juillet} can not be adapted to this setting. Indeed, it relies on the construction of two Borel subsets for which the Brunn-Minkowski inequality does not hold, namely, for all $R,\eps>0$, one can find $A,B\subset M$ such that $\mathrm{diam}(A\cup B)<R$, $\m(A)\approx\m(B)$, and such that there exists $t\in (0,1)$ for which
\begin{equation}
\label{eq:bm_ineq}
    \m(Z_t(A,B))\leq \frac{1}{2^{\mathcal{N}-\dim M}}\m(B)(1+\eps),
\end{equation}
where $Z_t(A,B)$ denotes the $t$-intermediate set between $A$ and $B$ and $\mathcal{N}$ is the so-called \emph{geodesic dimension} of $M$, see  \cite{MR3502622} or \cite[Def.\ 5.47]{MR3852258} for a precise definition. 
The inequality \eqref{eq:bm_ineq} allows to contradict the Brunn-Minkowski inequality if and only if the geodesic dimension $\mathcal N$ is strictly greater than $\dim M$. However, in the \ariem setting, $\mathcal N=\dim M$ almost everywhere, making this construction inconclusive. We mention that Juillet in \cite{MR2648260} disproved the $\cd$ condition in the simple example of the standard Grushin plane (cf. Example \ref{ex:grushin}) equipped with the Lebesgue measure, by direct computations. Heuristically, disproving the $\cd$ condition in \ariem manifolds is a more challenging task, since they behave in some sense like non-complete Riemannian manifolds. Thus, a new strategy is needed. 

\medskip 

Our idea is to exploit the one-dimensional characterization of the $\cd$ condition: 
\begin{equation}\label{eq:camon}
    \cd(K,N)\quad\Rightarrow\quad \cd^1(K,N),
\end{equation}
proven by Cavalletti and Mondino in \cite{MR3648975}, and contradict the $\cd^1(K,N)$ condition. For any $1$-Lipschitz function $u$, the latter relies on a disintegration of the reference measure, associated with $u$, in one-dimensional transport rays and requires the $\cd(K,N)$ condition to hold along them. The main advantage in dealing with one-dimensional $\cd$ spaces is related to a differential characterization of the $\cd$ densities, (cf.\ Lemma \ref{lem:diff_char}), which is easier to disprove compared with the convexity of the R\'enyi entropy.  In Section \ref{subsection:onedimCD}, we present a local version of the one-dimensional characterization \eqref{eq:camon} (cf. Proposition \ref{prop:localization}), which permits to exploit the local structure of \sr manifolds. Then, in the case of an \ariem manifold, equipped with a smooth positive measure $\m$, we are able to explicitly compute the disintegration and verify that the \emph{one-dimensional} $\cd(K,N)$ condition along the rays does not hold for any $K\in\R$ and $N\in (1,\infty)$. Our main result is the following, cf.\ Theorems \ref{thm:2dim_cd} and \ref{thm:strongly_reg_cd}. We refer to Sections \ref{sec:prel} and \ref{sec:strongly_reg} for precise definitions. 

\begin{thm}
\label{thm:intro}
Let $M$ be a complete \ariem manifold and let $\m$ be any smooth positive (i.e.\ with strictly positive density) measure on $M$. Assume $M$ is either of dimension $2$ or strongly regular. Then, the metric measure space $(M,\di,\m)$ does not satisfy the $\cd(K,N)$ condition for any $K\in\R$ and $N\in (1,+\infty)$. 
\end{thm}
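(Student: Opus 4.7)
The plan is to leverage the implication $\cd(K,N) \Rightarrow \cd^1(K,N)$ proved in \cite{MR3648975} and to disprove the latter directly. For every pair $(K,N)$ the goal is to exhibit some explicit $1$-Lipschitz function $u \colon M \to \R$ and a family of transport rays of $\q$-positive measure along which the associated one-dimensional conditional density fails the differential characterization of $\cd(K,N)$ densities recalled in Lemma \ref{lem:diff_char}. Since the $\cd^1(K,N)$ condition requires the corresponding one-dimensional mm-spaces to be $\cd(K,N)$ for $\q$-almost every ray, such a family is enough to rule out $\cd(K,N)$ on $(M,\di,\m)$ globally.

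The analysis is local, carried out near a generic point $p_0$ of the singular set $\Z$. In the $2$-dimensional case I would work in adapted coordinates $(x,y)$ in which $\Z=\{x=0\}$ and the orthonormal frame of the \ariem structure takes the normal form $X_1=\partial_x$, $X_2=x^{k} a(x,y)\,\partial_y$ with $a(0,y)\neq 0$ and $k\geq 1$, the Grushin plane being the prototype. For strongly regular \ariem structures I would rely on the analogous normal form available in that setting (cf.\ Section \ref{sec:strongly_reg}) in order to localize the computation, along each transport ray, to a two-dimensional slice transverse to $\Z$ where the previous analysis applies.

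As a $1$-Lipschitz function I would use the (signed) \sr distance from $\Z$, possibly modified so that the disintegration can be read off explicitly. The associated transport rays are horizontal geodesics perpendicular to $\Z$; each of them is parametrized by the arc-length $t=|x|$ and reaches $\Z$ in finite time. Writing the smooth reference measure $\m$ in the chosen coordinates and applying Fubini, one obtains the disintegration $\m=\int \m_\alpha\,d\q(\alpha)$ together with an explicit formula for the conditional density $h_\alpha(t)$ in terms of the local expression of $\m$ and of the coefficient $a$.

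The core of the proof is then the analysis of $h_\alpha$ via Lemma \ref{lem:diff_char}. The distinctive feature of \ariem geometry enters here: the intrinsic \sr volume form blows up near $\Z$ at a polynomial rate governed by $k$, whereas the smooth reference measure $\m$ stays bounded, and the resulting mismatch forces a precise boundary behaviour of $h_\alpha$ at $t=0$ that is incompatible with the $\tfrac{K}{N-1}$-concavity required of $h_\alpha^{1/(N-1)}$. The main difficulty, and where I expect most of the work to sit, is to make this failure uniform in $(K,N)\in\R\times(1,\infty)$. I would handle the regime $K>0$ essentially by non-compactness of the rays, while the genuinely harder regime $K\leq 0$ must be tackled through a careful asymptotic expansion of $h_\alpha$ near the singular endpoint, possibly allowing the auxiliary choices inside the construction of $u$ to depend on the target $(K,N)$.
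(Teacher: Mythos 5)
Your high-level framework matches the paper's: reduce to $\cd^1(K,N)$ via Cavalletti--Mondino, compute the disintegration along the transport rays of a well-chosen $1$-Lipschitz function, and contradict the differential characterization of Lemma~\ref{lem:diff_char}. However, the specific choice of $1$-Lipschitz function you propose, namely the signed distance from the singular set $\Z$ itself, does not work, and the heuristic you give to justify it is incorrect. In the $2$-dimensional normal form $X=\partial_x$, $Y=f(x,z)\partial_z$ with $\Z=\{f=0\}$ (say $\Z=\{x=0\}$ locally), the hypersurface $\Z$ has \emph{no} characteristic points: the distribution at $(0,z_0)$ is $\spn\{\partial_x\}$, transverse to $T\Z=\spn\{\partial_z\}$. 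The transport rays for $\delta_\Z$ are the integral curves $s\mapsto(s,z_0)$ of $\partial_x$, which are normal geodesics by Lemma~\ref{lem:2dim_coord}. Running the disintegration formula~\eqref{eq:density} along these rays, one finds $h_{z_0}(s)\propto m(s,z_0)$ with $m$ the smooth, bounded, strictly positive density of $\m$. Its second logarithmic derivative in $s$ is uniformly bounded, so no divergence, hence no contradiction, can be extracted. The fact that the intrinsic \sr volume form blows up near $\Z$ is true but irrelevant here: the reference measure in the $\cd$ condition is the fixed smooth $\m$, and the marginals of $\m$ along the $\partial_x$-rays simply do not see the singularity.

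The idea the paper actually uses, and which is absent from your proposal, is to exploit a \emph{characteristic point}. One chooses a hypersurface $\surf$ \emph{transverse} to $\Z$ (in the $2$-dimensional case, $\surf=\{z=0\}$) so that $\D_{p_0}\subset T_{p_0}\surf$ at the singular point $p_0$. The distance $\delta_\surf$ is then only H\"older in coordinates near $p_0$, and the transport rays emanate from $\surf$ in the $z$-direction with initial covector $\lambda(q)=f(x,0)^{-1}\,dz$, which degenerates as $q=(x,0)\to p_0$. This degeneration is what produces the blow-up $(\log h_q)''(0)=\frac{s-1}{2x^2}+O(1)\to+\infty$. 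Note also that once you have $(\log h_q)''(0)\to+\infty$, uniformity in $(K,N)$ is automatic and requires no case distinction on the sign of $K$: the inequality $(\log h)''+\frac{1}{N-1}((\log h)')^2\leq -K$ is violated as soon as $(\log h)''>-K$, since the extra term is nonnegative; non-compactness of rays plays no role. In summary, your proposal is missing the essential mechanism — the characteristic point of a transversal hypersurface — and, as stated (distance from $\Z$), the key computation would produce bounded densities and fail to contradict anything.
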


\noindent Remarkably, for $2$-dimensional \ariem manifolds, we do not require any additional assumption on the structure of the singular region $\Z$, see \eqref{eq:singular} for the precise definition. However, as soon as the dimension of the manifold increases, the complexity of the computations prevents us to treat the general case and we need an auxiliary control on the behavior of the distribution. Nonetheless, we stress that our procedure is algorithmic and can be applied to any explicit example of \ariem manifold. This algorithmic procedure has been implemented in the software \emph{Mathematica}, see \cite{script}.

\medskip

A crucial tool for proving Theorem \ref{thm:intro} will be a truly \sr phenomenon, namely the existence of \emph{characteristic points}. For an embedded hypersurface $\surf\subset M$, a characteristic point is a point where the distribution is tangent to $\surf$. Of course, such points do not exist in Riemannian geometry, but as soon as the rank of the distribution $r(p)<\dim M$ for some $p\in M$, they can appear. Usually, characteristic points are source of subtle technical problems, mostly related to the low regularity of the (signed) distance $\delta_\surf$ from $\surf$. Indeed, although being $1$-Lipschitz with respect to $\di$, $\delta_\surf$ is not smooth around characteristic points (and not even Lipschitz in coordinates). In the proof of Theorem \ref{thm:intro}, we choose a suitable hypersurface $\surf$, we build the disintegration of $\m$ associated with a localized version of $\delta_\surf$ and we exploit its singular behavior to contradict the differential characterization of the one-dimensional $\cd(K,N)$ condition. In particular, $\surf$ is chosen to be \emph{transverse} to the singular region of $M$ in such a way $\surf\cap\Z$ exhibits characteristic points; we can then exploit the Riemannian structure at points of $\surf\setminus\Z$ to describe the degeneration of $\delta_\surf$ in the disintegration of $\m$. For example, in the standard Grushin plane, where the singular region is $\Z=\{x=0\}$, a suitable transverse hypersurface is $\surf=\{y=0\}$.  

It is worth mentioning that there exists a weaker synthetic notion of curvature bounds, introduced by Ohta in \cite{MR2341840}, called \emph{measure contraction property} or $\mathsf{MCP}$ condition. This property seems to be more suited to \sr geometry, see for example \cite{MR3848070,MR3935035,MR4019096,MR4245620}. Finally, we refer to \cite{MR4373164} for a relaxation of the $\cd$ condition, called \emph{quasi-curvature-dimension} condition, which holds for a certain class of \sr manifolds. However, it is not known whether these weaker conditions hold for a general \ariem manifold.

\subsection*{Acknowledgments} The authors gratefully acknowledge support from the European Research Council (ERC) under the program ERC-AdG RicciBounds, grant agreement No. 694405. The authors are thankful to Fabio Cavalletti for stimulating discussion regarding the $\cd^1$ condition. We would like to thank anonymous referee for the careful reading of the manuscript and the valuable comments.

\medskip

After submitting this work, Rizzi and Stefani proved in \cite{rizzistefani} that every \sr manifold does not satisfy the $\cd(K,\infty)$ condition, using different techniques.


\section{Preliminaries}
\label{sec:prel}
\subsection{Almost-Riemannian geometry}

We recall some basic facts about almost-Rie\-mannian geometry, following \cite{ABB-srgeom}. 
\begin{defn}
Let $M$ be a smooth, connected manifold. A \emph{sub-Riemannian structure} on $M$ is a triple $(\mathbb{U},\xi,(\cdot|\cdot))$ satisfying the following conditions:
\begin{enumerate}[label=\roman*)]
    \item $\pi_{\mathbb U}\colon \mathbb U\rightarrow M$ is a Euclidean bundle of rank $k$ with base $M$, namely for all $p\in M$, the fiber $\mathbb U_p$ is a vector space equipped with a scalar product $(\cdot|\cdot)_p$, which depends smoothly on $p$;
    \item The map $\xi\colon\mathbb U\rightarrow TM$ is a morphism of vector bundles, i.e. $\xi$ is smooth and such that the following diagram commutes:
    \begin{equation}
        \xymatrix{
        \mathbb U \ar[rd]_{\pi_{\mathbb U}} \ar[r]^{\xi} & TM \ar[d]^{\pi_M}\\
                                       & M
        }
    \end{equation}
    where $\pi_M\colon TM\rightarrow M$ denotes the canonical projection of the tangent bundle. 
    \item The distribution $\D=\{\xi(\sigma)\mid \sigma\colon M\rightarrow \mathbb U \text{ smooth section}\}\subset TM$ satisfies the \emph{H\"ormander condition} (also known as bracket-generating condition), namely 
\begin{equation}
    \mathrm{Lie}_p(\D)=T_pM,\qquad\forall\,p\in M.
\end{equation}
\end{enumerate}
With a slight abuse of notation, we say that $M$ is a \sr manifold.
\end{defn}

\noindent Let $(\mathbb U,\xi,(\cdot|\cdot))$ be a \sr structure on $M$. We can define the sub-Rieman\-nian norm on $\D$ as
\begin{equation}
\label{eq:norm}
    \|v\|^2_p=\inf\{(u|u)_p\mid u\in\mathbb U_p,\ \xi(u)=v\},\qquad\forall\,v\in\D_p,\ p\in M.
\end{equation}
The norm \eqref{eq:norm} is well-defined since the infimum is actually a minimum and it induces a scalar product $g_p$ on $\D_p$ by polarization. Notice that different \sr structures on $M$ may define the same distributions and induced norms. This is the case for equivalent \sr structures. 

\begin{defn}
Let $(\mathbb{U}_1,\xi_1,(\cdot|\cdot)_1)$, $(\mathbb{U}_2,\xi_2,(\cdot|\cdot)_2)$ be two \sr structures on $M$. These are said to be \emph{equivalent} if the following conditions hold:
\begin{enumerate}[label=\roman*)]
    \item There exists a Euclidean bundle $(\mathbb V,(\cdot|\cdot)_\mathbb{V})$ and two surjective bundle morphisms $p_i\colon\mathbb V\rightarrow \mathbb{U}_i$ such that the following diagram is commutative
     \begin{equation}
        \xymatrix{                 
        \mathbb V \ar[r]^{p_1} \ar[d]_{p_2} &  \mathbb{U}_1 \ar[d]^{\xi_1}\\
         \mathbb{U}_2 \ar[r]_{\xi_2}& TM
        }
    \end{equation}
    \item The projections $p_i$'s are compatible with the scalar products defined on $\mathbb{U}_i$, namely
    \begin{equation}
        (u|u)_i=\min\{(v|v)_\mathbb{V}\mid p_i(v)=u\},\qquad\forall\,u\in\mathbb{U}_i,\quad i=1,2.
    \end{equation}
\end{enumerate}
\end{defn}

\begin{defn}
Let $M$ be a \sr manifold. The \emph{minimal bundle rank} is the infimum of the rank of Euclidean bundles inducing equivalent structures on $M$. For $p\in M$, the \emph{local minimal bundle rank} of $M$ at $p$ is the minimal bundle rank of the structure when restricted to a sufficiently small neighborhood $\mathcal{U}_p$.
\end{defn}

\begin{defn}[Almost-Riemannian structure]
\label{def:ARS}
Let $M$ be a connected, smooth manifold of dimension $n+1$ and let $(\mathbb{U},\xi,(\cdot|\cdot))$ be a \sr structure on $M$. We say that $M$ is an \emph{\ariem manifold} if the local minimal bundle rank of the structure is $n+1$.
\end{defn}

We denote by $\Z$ the set of \emph{singular points}, namely those points where the distribution has not full rank: 
\begin{equation}\label{eq:singular}
    \Z=\{p\in M\mid \dim(\D_p)<n+1\}.
\end{equation}
Notice that $\Z$ is closed, since the rank of the distribution is lower semi-continuous. We say that a point is \emph{Riemannian} if it belongs to $M\setminus\Z$.

\begin{rmk}
If the singular set is empty, then the structure on $M$ is Riemannian. Therefore, we will always tacitly assume that $\Z\neq\emptyset$.
\end{rmk}

A local orthonormal frame for the distribution is the image through $\xi$ of a local orthonormal frame for $\mathbb{U}$. Consequently, by definition of \ariem manifold of dimension $n+1$, it consists of exactly $n+1$ vector fields which are linearly independent only at Riemannian points. In particular, local orthonormal frames are standard Riemannian orthonormal frames around Riemannian points.

\begin{example}[Grushin plane]\label{ex:grushin}
Let $M=\R^2$ and consider the \sr structure given by $\mathbb{U}=\R^2\times\R^2$ with the standard Euclidean scalar product on fibers and
\begin{equation}
    \xi\colon\mathbb{U}\rightarrow T\R^2;\qquad \xi(x,z,u_1,u_2)=(x,z,u_1,xu_2).
\end{equation}
As one can check, the resulting distribution is generated by the orthonormal vector fields $X=\partial_x$, $Y=x\partial_z$. The local minimal bundle rank is equal to $2$, thus the structure is almost-Riemannian. In this case the singular region is $\Z=\{x=0\}$ and $\{X,Y\}$ is a (global) orthonormal frame. 
\end{example}

\begin{rmk}
Any truly \sr structure (meaning that is not Riemannian) of rank $2$ on a $2$-dimensional manifold is always almost-Riemannian, in the sense of Definition \ref{def:ARS}. 
\end{rmk}

\subsection{Almost-Riemannian distance}
Let $(\mathbb{U},\xi,(\cdot|\cdot))$ be an \ariem structure on $M$. We say that $\gamma : [0,T] \to M$ is a \emph{horizontal curve}, if it is absolutely continuous and
\begin{equation}
\dot\gamma(t)\in\D_{\gamma(t)}, \qquad\text{for a.e.}\,t\in [0,T].
\end{equation}
This implies that there exists a measurable function $u:[0,T]\to\mathbb U$, such that
\begin{equation}
\pi_{\mathbb U}(u(t))=\gamma(t),\qquad\dot\gamma(t)=\xi(u(t)), \qquad \text{for a.e.}\, t \in [0,T].
\end{equation} 
Moreover, we have that $u\in L^\infty([0,T],\mathbb U)$, see \cite[Lemma 3.12]{ABB-srgeom}, therefore the map $t\mapsto \|\dot\gamma(t)\|$ is integrable on $[0,T]$. We define the \emph{length} of a horizontal curve as follows:
\begin{equation}
\ell(\gamma) = \int_0^T \|\dot\gamma(t)\| \de t.
\end{equation}
The \emph{\ariem distance} on $M$ is defined, for any $p,q\in M$, by
\begin{equation}\label{eq:infimo}
\di(p,q) = \inf\{\ell(\gamma)\mid \gamma \text{ horizontal curve between $p$ and $x$} \}.
\end{equation}
By Chow-Rashevskii theorem (see for example \cite[Thm. 5.9]{AS-GeometricControl}), the bracket-genera\-ting assumption ensures that the distance $\di\colon M\times M\to\R$ is finite and continuous. Furthermore it induces the same topology as the manifold one. We say that $M$ is \emph{complete}, if the metric space $(M,\di)$ is.

\subsection{Geodesics and Hamiltonian flow}


A \emph{geodesic} is a horizontal curve $\gamma \colon[0,T] \rightarrow M$, parameterized with constant speed, such that any sufficiently short segment is length-minimizing. The \emph{almost-Riemannian Hamiltonian} is the function on the cotagent space $H\in C^\infty(T^*M)$ defined by
\begin{equation}
\label{eq:Hamiltonian}
H(\lambda)= \frac{1}{2}\sum_{i=0}^n \langle \lambda, X_i \rangle^2, \qquad \lambda \in T^*M,
\end{equation}
where $\{X_0,\ldots,X_n\}$ is a local orthonormal frame for the almost-Riemannian structure, and $\langle \lambda, \cdot \rangle $ denotes the action of covectors on vectors. The \emph{Hamiltonian vector field} $\vec H$ on $T^*M$ is defined by $\varsigma(\cdot,\vec H)=dH$, where $\varsigma\in\Lambda^2(T^*M)$ is the canonical symplectic form. Solutions $\lambda\colon[0,T] \rightarrow T^*M$ to the \emph{Hamilton equations}
\begin{equation}\label{eq:Hamiltoneqs}
\dot{\lambda}(t) = \vec{H}(\lambda(t)),
\end{equation}
are called \emph{normal extremals}. Their projections $\gamma(t) = \pi(\lambda(t))$ on $M$, where $\pi\colon T^*M\rightarrow M$ is the bundle projection, are locally length-minimizing horizontal curves parameterized with constant speed, and are called \emph{normal geodesics}. If $\gamma$ is a normal geodesic with normal extremal $\lambda$, then its speed is given by $\| \dot\gamma \|_g = \sqrt{2H(\lambda)}$. In particular
\begin{equation}
\label{eq:speed}
\ell(\gamma|_{[0,t]}) = t \sqrt{2H(\lambda(0))},\qquad \forall\, t\in[0,T].
\end{equation} 
There is another class of length-minimizing curves in sub-Riemannian geometry, called \emph{abnormal} or \emph{singular}. As for the normal case, to these curves it corresponds an extremal lift $\lambda(t)$ on $T^*M$, which however may not follow the Hamiltonian dynamics \eqref{eq:Hamiltoneqs}. 
Here we only observe that an abnormal extremal lift $\lambda(t)\in T^*M$ satisfies
\begin{equation}
\label{eq:abn}
\langle \lambda(t),\D_{\pi(\lambda(t))}\rangle=0\quad \text{and} \quad \lambda(t)\neq 0,\qquad \forall\, t\in[0,T] ,
\end{equation}
that is $H(\lambda(t))\equiv 0$, therefore abnormal geodesics are always contained in the singular region $\Z$. A geodesic may be abnormal and normal at the same time. 

\begin{defn}
Let $M$ be an \ariem manifold and let $p\in M$. Then, the \emph{almost-Riemannian exponential map} is 
\begin{equation}
\label{eq:ar_exp_map}
\srexp_p(\lambda)=\pi\circ e^{\vec H}(\lambda),\qquad\forall\lambda\in T_p^*M,
\end{equation} 
where $H$ denotes the \ariem Hamiltonian \eqref{eq:Hamiltonian} and $e^{\vec H}(\lambda)$ is the solution to \eqref{eq:Hamiltoneqs} at time $t=1$, with initial datum $\lambda\in T_p^*M$.  
\end{defn}

Note that, in general, $\srexp_p$ may not be defined on the whole cotangent space, but if $M$ is complete, then $\vec{H}$ is a complete vector field and \eqref{eq:ar_exp_map} is well-posed.

\subsection{Length-minimizers to a hypersurface} 
Let $\surf\subset M$ be a smooth hypersurface and fix $q_0\in \surf$. Moreover, let $v\in C^\infty(M)$ be a local defining function for $\surf$ around $q_0$, namely there exists an open neighborhood $\surf_{q_0}\subset\surf$ of $q_0$ such that
\begin{equation}
\label{eq:local_def_fun}
    \surf_{q_0}\subset\{v=0\}\qquad\text{and}\qquad dv_{|\surf_{q_0}}\neq 0.    
\end{equation}
We define the \emph{local signed distance function} from $\surf$ around $q_0$  as follows:
\begin{equation}
\label{eq:dist_surf}
 \delta_v:= {\rm sgn}(v(p)) \cdot\di(p,\{v=0\}),\qquad\forall\,p\in M.
\end{equation}
%
%
%
%
Let $\gamma\colon[0,T]\rightarrow M$ be a horizontal curve, parameterized with constant speed, such that $\gamma(0)\in \surf$, $\gamma(T) = p \in M\setminus \surf$ and assume $\gamma$ is a minimizer for $\di(\cdot,\surf)$, that is $\ell(\gamma)=\di(p,\surf)$. In particular, $\gamma$ is a geodesic and any corresponding normal or abnormal lift, say $\lambda :[0,T]\to T^*M$, must satisfy the transversality conditions, cf.\ \cite[Thm 12.13]{AS-GeometricControl},
\begin{equation}\label{eq:trcondition}
\langle \lambda(0), w\rangle=0,\qquad \forall \,w\in T_{\gamma(0)} \surf.
\end{equation}
Equivalently, the initial covector $\lambda(0)$ must belong to the \emph{annihilator bundle} $\A\surf$ of $\surf$ with fiber $\A_q\surf= \{\lambda \in T_q^*M \mid \langle \lambda, T_q \surf\rangle = 0\}$, for any $q\in\surf$. The restriction of $\srexp_q$ to the annihilator bundle of $\surf$ allows to build (locally) a smooth tubular neighborhood around non-characteristic points. Recall that $q\in\surf$ is a \emph{characteristic point}, and we write $q\in C(\surf)$, if $\D_q\subset T_q\surf$.

\begin{lem}
\label{lem:exp_sigma}
Let $\surf\subset M$ be a smooth hypersurface, let $q_0\in\surf\setminus C(\surf)$ be a non-characteristic point and $v\in C^\infty(M)$ as in \eqref{eq:local_def_fun}. Then, there exist $\eps_{q_0}>0$ and a neighborhood $\mathcal{O}_{q_0}\subset\surf_{q_0}$ of $q_0$ such that the map 
\begin{equation}
\label{eq:diffeo_G}
G\colon (-\eps_{q_0},\eps_{q_0})\times\mathcal{O}_{q_0}\rightarrow M,\qquad G(s,q)=\exp_q(s\lambda(q)),
\end{equation} 
is a diffeomorphism on its image, where $\lambda(q)$ is the unique element (up to a sign) of $\A_q\surf$ such that $2H(\lambda(q))=1$. Moreover, $\delta_v$ is smooth in $G((-\eps_{q_0},\eps_{q_0})\times\mathcal{O}_{q_0})$ and\footnote{The \emph{horizontal gradient} of $f\in C^\infty(M)$ is defined by $g_p(\nabla f,v)=d_pf(v),\forall\,v\in\D_p$ and $p\in M$.}
\begin{equation}
\label{eq:grad_delta}
{G_*\partial_s}_{|(s,q)}=\nabla\delta_v(G(s,q)),\qquad\forall\,(s,q)\in (-\eps_{q_0},\eps_{q_0})\times\mathcal{O}_{q_0}.
\end{equation}
\end{lem}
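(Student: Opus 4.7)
The plan is to realize $G$ as a diffeomorphism via the inverse function theorem, identify $\delta\circ G$ with the second coordinate, and then extract both the smoothness of $\delta$ and the formula for its horizontal gradient.

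Since $q_0\notin\Z$, in a suitably small neighborhood of $q_0$ the \ariem structure is Riemannian and the annihilator $\A\surf$ is a smooth line subbundle of $T^*M|_\surf$. After choosing an orientation compatible with $\Omega$, there is a unique smooth section $q\mapsto\lambda(q)$ of $\A\surf$ with $2H(\lambda(q))=1$, so $G(s,q)=\srexp_q(s\lambda(q))$ is smooth in $(s,q)$. The differential $dG_{(0,q_0)}$ restricts to the identity on $T_{q_0}\surf$, while $dG_{(0,q_0)}(\partial_s)=\sum_{i=0}^n\langle\lambda(q_0),X_i\rangle X_i$ is the Riemannian metric dual of $\lambda(q_0)$. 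Since $\lambda(q_0)\in\A_{q_0}\surf$ annihilates $T_{q_0}\surf$ and is nonzero, this vector is transverse to $T_{q_0}\surf$, and the inverse function theorem supplies $\eps_{q_0}>0$ and $\mathcal{O}_{q_0}\ni q_0$ such that $G$ restricts to a diffeomorphism onto an open set $\mathcal{V}\subset M\setminus\Z$.

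The core step is to prove, after possibly shrinking the chart, that $\delta\circ G(s,q)=s$. The bound $|\delta(G(s,q))|\le|s|$ is immediate, since $t\mapsto G(ts,q)$ is a horizontal curve of length $|s|$ joining $q\in\surf$ to $G(s,q)$. For the reverse inequality, fix $p=G(s,q)\in\mathcal{V}$ and let $\hat\gamma\colon[0,|\delta(p)|]\to M$ be a unit-speed length-minimizer with $\hat\gamma(0)=:\tilde q\in\surf$ and $\hat\gamma(|\delta(p)|)=p$; by continuity, after a further shrinking I may assume $\tilde q\in\mathcal{O}_{q_0}$. Since $p\notin\Z$, the curve $\hat\gamma$ cannot be purely abnormal: by \eqref{eq:abn} an abnormal extremal $\hat\lambda$ would satisfy $\hat\lambda(|\delta(p)|)\neq 0$ and $\langle\hat\lambda(|\delta(p)|),\D_p\rangle=0$, which is impossible since $\D_p=T_pM$. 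Hence $\hat\gamma$ admits a normal extremal lift $\hat\lambda$ with $2H(\hat\lambda)\equiv 1$, and the transversality condition \eqref{eq:trcondition} forces $\hat\lambda(0)\in\A_{\tilde q}\surf$; normalization yields $\hat\lambda(0)=\pm\lambda(\tilde q)$, so $\hat\gamma(t)=G(\pm t,\tilde q)$. Evaluating at $t=|\delta(p)|$ and invoking the injectivity of $G$ gives $\tilde q=q$ and $|\delta(p)|=|s|$; the sign of $\delta$ agrees with that of $s$ by the chosen orientation of $\lambda$. In particular $\delta$ is smooth on $\mathcal{V}$.

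Differentiating $\delta\circ G(s,q)=s$ in $s$ gives $d\delta(G_*\partial_s)=1$, while by \eqref{eq:speed} $\|G_*\partial_s\|=\sqrt{2H(\lambda)}=1$ along the normal geodesic, and the $1$-Lipschitzness of $\delta$ with respect to $\di$ implies $\|\nabla\delta\|\le 1$ on $\mathcal{V}$. The Cauchy--Schwarz chain
\begin{equation}
1=d\delta(G_*\partial_s)=g(\nabla\delta,G_*\partial_s)\le\|\nabla\delta\|\,\|G_*\partial_s\|\le 1
\end{equation}
is therefore an equality, which forces $\nabla\delta=G_*\partial_s$ pointwise, completing the proof. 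The hardest step is the identification $\delta\circ G(s,q)=s$: it requires showing that every length-minimizer from $p\in\mathcal{V}$ to $\surf$ stays inside the tubular chart, a conclusion that jointly rests on the absence of abnormals at Riemannian points, the transversality condition \eqref{eq:trcondition}, and the injectivity of $G$.
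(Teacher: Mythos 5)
Your proof is correct and reaches the same key identity $\delta\circ G(s,q)=s$, but takes a genuinely different route in the core step. The paper argues by contradiction with a sequential compactness argument: assuming the identity fails, it extracts sequences $q_n\to q_0$, $s_n\to 0$, uses compactness of length-minimizers to produce feet $\tilde q_n\to q_0$, then cites a result (that minimizers from non-characteristic points of $\surf$ must be normal geodesics with transversal initial covector) to identify $\gamma_n$ with $G(\cdot,\tilde q_n)$ and contradict injectivity of $G$. You instead argue directly: any unit-speed length-minimizer $\hat\gamma$ from $\surf$ to $p=G(s,q)$ cannot be purely abnormal because $p\notin\Z$ forces $\D_p=T_pM$, so no nonzero covector annihilates $\D_p$; hence $\hat\gamma$ admits a normal lift, and transversality together with the injectivity of $G$ identify it with $G(\cdot,q)$. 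Your normality argument (via the \emph{endpoint} being Riemannian) is self-contained and arguably cleaner than the paper's citation (which is phrased via the \emph{starting point} being non-characteristic). One step you gloss over is ``by continuity, after a further shrinking I may assume $\tilde q\in\mathcal{O}_{q_0}$'': this must hold \emph{uniformly} in $(s,q)$, which is precisely what the paper's sequential argument establishes. It can be fixed cheaply: pick $r>0$ with $\{q\in\surf:\di(q,q_0)<r\}\subset\mathcal{O}_{q_0}$, shrink $\eps_{q_0}$ and $\mathcal{O}_{q_0}$ so that every $p$ in the image of $G$ satisfies $\di(p,q_0)<r/2$, and observe that any foot $\tilde q$ of $p$ then obeys $\di(\tilde q,q_0)\le\di(\tilde q,p)+\di(p,q_0)\le 2\di(p,q_0)<r$. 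Finally, your Cauchy--Schwarz argument upgrading $d\delta(G_*\partial_s)=1$ to $\nabla\delta=G_*\partial_s$ makes explicit a step the paper leaves terse and is a nice addition.
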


\begin{rmk}
It is known that if $\surf$ has no characteristic points, the signed distance is smooth in a tubular neighborhood of $\surf$, cf. \cite[Prop.\ 3.1]{FPR-sing-lapl}. This lemma can be regarded as its local version and its proof is a straightforward adaptation of the aforementioned result. Moreover, note that $C(\surf)\subset\Z$ and so the Riemannian points of $\surf$ are non-characteri\-stic. Finally, if $\surf$ contains characteristic points, the parameter $\eps_{q_0}$, as well as $\mathcal{O}_{q_0}$, can not be chosen uniformly. 
\end{rmk}

\begin{rmk}
\label{rmk:min_geod}
By condition \eqref{eq:grad_delta}, for any $q\in\mathcal{O}_{q_0}$, we have 
\begin{equation}
\label{eq:min_geod}
(-\eps_{q_0},\eps_{q_0})\in s\mapsto G(s,q)\in M
\end{equation}
is the unique minimizing geodesic (parameterized by unit-speed) from $\surf$ passing through $q$. Moreover, notice that the initial covector $\lambda(q)$ in \eqref{eq:diffeo_G} is unique up to a sign: the only requirement is to choose this covector in such a way it defines a continuous section of the annihilator bundle.
\end{rmk}

\section{The curvature-dimension condition}\label{sec:cd}

A triple $(X,\di,\m)$ is called metric measure space if $(X,\di)$ is a complete and separable metric space and $\m$ is a locally finite Borel measure on it. In the following $C([0, 1], X)$ will stand for the space of continuous curves from $[0, 1]$ to $X$. A curve $\gamma\in C([0, 1], X)$ is called \textit{minimizing geodesic} if 
\begin{equation}
    \di(\gamma_s, \gamma_t) = |t-s| \cdot  \di(\gamma_0, \gamma_1) \quad \text{for every }s,t\in[0,1],
\end{equation}
we denote by $\Geo(X)$ the space of minimizing geodesics on $X$. The metric space $(X,\di)$ is said to be geodesic if every pair of points $x,y \in X$ can be connected with a curve $\gamma\in \Geo(X)$. 
For any $t \in [0, 1]$ we define the evaluation map $e_t \colon C([0, 1], X) \to X$ by setting $e_t(\gamma) := \gamma_t$ and the stretching/restriction operator $\rest{r}{s}$ in $C([0, 1], X)$, defined, for all $0\leq r<s\leq1$, by
\begin{equation}
[\rest{r}{s}(\gamma)]_t := \gamma_{r + t(s-r)}, \qquad t \in [0, 1].
\end{equation}
We denote by $\Prob(X)$ the set of Borel probability measures on $X$ and by $\Prob_2(X) \subset \Prob(X)$ the set of those having finite second moment. We endow the space $\Prob_2(X)$ with the Wasserstein distance $W_2$, defined by
\begin{equation}
\label{eq:defW2}
    W_2^2(\mu_0, \mu_1) := \inf_{\pi \in \mathsf{Adm}(\mu_0,\mu_1)}  \int \di^2(x, y) \, \de \pi(x, y),
\end{equation}
where $\mathsf{Adm}(\mu_0, \mu_1)$ is the set of all the admissible transport plans between $\mu_0$ and $\mu_1$, namely all the measures in $\Prob(X^2)$ such that $(\p_1)_\sharp \pi = \mu_0$ and $(\p_2)_\sharp \pi = \mu_1$. The metric space $(\Prob_2(X),W_2)$ is itself complete and separable, moreover, if $(X,\di)$ is geodesic, then $(\Prob_2(X),W_2)$ is geodesic as well. In particular, every geodesic $(\mu_t)_{t\in [0,1]}$ in $(\Prob_2(X),W_2)$ can be represented with a measure $\eta \in \Prob(\Geo(X))$, meaning that $\mu_t = (e_t)_\# \eta$. A subset $G\subset \Geo(X)$ is called non-branching if for any pair $\gamma_1,\gamma_2\in G$ such that $\gamma_1\neq\gamma_2$, it holds that 
 \begin{equation}
     \rest{0}{t} (\gamma_1) \neq  \rest{0}{t} (\gamma_2) \qquad \text{for every }t\in (0,1).
 \end{equation}
 A metric measure space $(X,\mathsf{d},\mathfrak{m})$ is said to be essentially non-branching if for every two measures $\mu_0,\mu_1\in\Prob_2(X)$ which are absolutely continuous with respect to the reference measure $\m$ ($\mu_0,\mu_1 \ll \m$), every $W_2$-geodesic connecting them is concentrated on a non-branching set of geodesics.

\subsection{\texorpdfstring{$\cd$}{CD} spaces}

In this subsection we introduce the $\cd$ condition, pioneered by Sturm and Lott-Villani \cite{MR2237206,MR2237207,MR2480619}. This condition aims to generalize, to the context metric measure spaces, the notion of having Ricci curvature bounded from below by $K$ and dimension less than or equal to $N$. In particular, in the Riemannian setting it is possible to characterize this two bounds in terms of a property whose definition involves only the distance and the (volume) measure. This property, which is stated in Definition \ref{def:CD}, is given in terms of the following distortion coefficients: for every $K \in \R$ and $N\in (1,+\infty)$
\begin{equation}
    \tau_{K,N}^{(t)}(\theta):=t^{\frac{1}{N}}\left[\sigma_{K, N-1}^{(t)}(\theta)\right]^{1-\frac{1}{N}},
\end{equation}
where
\begin{equation}
\sigma_{K,N}^{(t)}(\theta):= 
\begin{cases}

\displaystyle  \frac{\sin(t\theta\sqrt{K/N})}{\sin(\theta\sqrt{K/N})} & \textrm{if}\  N\pi^{2} > K\theta^{2} >  0, \crcr
t & \textrm{if}\ 
K =0,  \crcr
\displaystyle   \frac{\sinh(t\theta\sqrt{-K/N})}{\sinh(\theta\sqrt{-K/N})} & \textrm{if}\ K < 0.
\end{cases}
\end{equation}

\begin{defn}\label{def:CD}
A metric measure space $(X,\di,\m)$ is said to be a $\cd(K,N)$ space (or to satisfy the $\cd(K,N)$ condition) if for every pair of measures $\mu_0=\rho_0\m,\mu_1= \rho_1 \m \in \Prob_2(X)$, absolutely continuous with respect to $\m$, there exists a $W_2$-geodesic $(\mu_t)_{t\in [0,1]}$ connecting them and induced by $\eta \in \Prob(\Geo(X))$, such that for every $t\in [0,1]$ $\mu_t =\rho_t \m \ll \m$ and the following inequality holds for every $N'\geq N$ and every $t \in [0,1]$
\begin{equation}\label{eq:CDcond}
    \int_X \rho_t^{1-\frac 1{N'}} \de \m \geq \int_{X \times X} \Big[ \tau^{(1-t)}_{K,N'} \big(\di(x,y) \big) \rho_{0}(x)^{-\frac{1}{N'}} +    \tau^{(t)}_{K,N'} \big(\di(x,y) \big) \rho_{1}(y)^{-\frac{1}{N'}} \Big]    \de\pi( x,y),
\end{equation}
where $\pi= (e_0,e_1)_\# \eta$.
\end{defn}

In general, the $\cd$ condition is not very easy to disprove, however when the reference space is an interval $I \subseteq \R$ the following lemma, whose proof can be find in \cite[Lemma A.5]{MR4309491}, provides a nice strategy.

\begin{lem}
\label{lem:diff_char}
Let $I\subset \R$  be an interval and let $h:I \to \R$ be a measurable function such that $(I,|\cdot|, h \Leb^1)$ is a $\cd(K,N)$ space. Then at any point $x$ in the interior of $I$ where $h$ is twice differentiable it holds that 
\begin{equation}
\label{eq:1d_characterization}
    (\log h)^{\prime \prime}(x)+\frac{1}{N-1}\left((\log h)^{\prime}(x)\right)^{2} \leq-K.
\end{equation}
\end{lem}

\begin{rmk}
This lemma holds also for $N=+\infty$, where now the left-hand side of \eqref{eq:1d_characterization} has to be intended as $(\log h)^{\prime \prime}(x)$ (for the definition of $\cd(K,\infty)$ space, see \cite{MR2237206}).
\end{rmk}

\noindent In fact, in order to disprove that the space $(I,|\cdot|, h \Leb^1)$ satisfies $\cd(K,N)$ is sufficient to find a point $x$ in the interior of $I$ such that $h$ is twice differentiable in $x$ and
\begin{equation}
    (\log h)^{\prime \prime}(x)+\frac{1}{N-1}\left((\log h)^{\prime}(x)\right)^{2} >-K.
\end{equation}
Notice also that, if we manage to prove that 
\begin{equation}\label{eq:disproveCD}
    (\log h)^{\prime \prime}(x) >-K,
\end{equation}
we automatically show that $(I,|\cdot|, h \Leb^1)$ does not satisfy $\cd(K,N)$ for every $N \in (1,+\infty]$. This observation will be fundamental in the following, especially in combination with the one-dimensional localization results we are now going to present.

\subsection{One-dimensional localization}
\label{subsection:onedimCD}
In this subsection we present a suitable adaptation of the one-dimensional characterization of the $\cd$ condition. This property, called $\cd^1(K,N)$ condition, has been studied in the general framework of essentially non-branching metric measure spaces with a curvature-dimension bound in \cite{cavalletti2013,MR3648975,MR4175820,MR4309491}. 
We provide a local version of such characterization, that allows us to take advantage of the local structure of \ariem manifolds. 

We recall a general result regarding disintegration of measures. Given a measurable space $(R,\mathscr R)$, and a function $\Q : R \to Q$ to a general set $Q$, we endow $Q$ with the push forward $\sigma$-algebra $\mathscr Q$ of $\mathscr R$, i.e. the biggest $\sigma$-algebra on $Q$ such that $\Q$ is measurable. Moreover, given a finite (non-null) measure $\rho$ on $(R,\mathscr R)$, consider the measure $\q:= \Q_\# \rho$ on $(Q,\mathscr Q)$.

\begin{defn}
    A disintegration of $\rho$ consistent with $\mathfrak Q$ is a map $Q\ni q\mapsto \rho_q \in \Prob(R)$ such that the following hold:
    \begin{enumerate}
        \item for all $B \in \mathscr R$, $\rho_\cdot (B)$ is $\q$-measurable,
        \item for all $B \in \mathscr R$, $C \in \mathscr Q$, we have
    \begin{equation*}
        \rho\left(B \cap \mathfrak{Q}^{-1}(C)\right)=\int_{C} \rho_{q}(B) \, \de \q( q).
    \end{equation*}
    \end{enumerate}
     A disintegration is called \emph{strongly consistent} with respect to $\Q$ if, in addition, for all $q \in Q$ it holds that $\rho_q({\mathfrak Q}^{-1}(q)) =1$.
\end{defn}

\begin{thm}[{\cite[Thm.\ 2.8]{MR3648975}}]\label{thm:disintegration}
    Let $(R,\mathscr R)$ be a countably generated measurable space and 
    $\rho$ be a finite measure on it. Assume there exists a partition of $R$ as
    \begin{equation*}
        R = \bigcup_{q \in Q} R_q,
    \end{equation*}
 denote by $\Q : R \to Q$ the quotient map and by $(Q, \mathscr Q, \q)$ the quotient measure space. If $(Q, \mathscr Q) = (X, \mathscr B(X))$ where $X$ is a Polish space and $\mathscr B(X)$ denotes its Borel $\sigma$-algebra, then there exists a unique strongly consistent disintegration $q \mapsto \rho_q$ with respect to $\Q$.
\end{thm}

Let $(X,\di,\m)$ be a metric measure space and fix an open subset  $\Omega\subset X$ with $0< \m (\Omega) = \m(\Bar{\Omega})<\infty$. Let $u:\Bar{\Omega} \to \R$ be a $1$-Lipschitz function, define
\begin{equation}
    \Gamma_u:= \{(x,y)\in \Bar{\Omega}\times \Bar{\Omega} \mid  u(x)-u(y)= \di(x,y)\}
\end{equation}
and its transpose $\Gamma_u^{-1}:=\{(x,y)\in \Bar{\Omega}\times \Bar{\Omega}\mid (y,x)\in \Gamma_u\}$.
Consequently, we introduce the transport relation $R_u$ and the transport set $T_u$ as 
\begin{equation}
    R_u:= \Gamma_u \cup \Gamma_u^{-1} \quad \text{and} \quad T_u := \p_1 \big(R_u \setminus \{(x,y)\in \Bar{\Omega}\times \Bar{\Omega} \mid x=y\}\big),
\end{equation}
where $\p_1$ denotes the projection on the first factor. Although this is not always the case, if we assume that $R_u$ is an equivalence relation, we may partition the set $\Bar{\Omega}$. Letting $Q$ be the set of equivalence classes and $\Q:\bar\Omega \to Q$ the quotient map, we can write 
\begin{equation*}
    \Bar{\Omega} = \bigcup_{q \in Q} \gamma_q,
\end{equation*}
where $\gamma_q:=\{x\in \Bar{\Omega} \mid \Q (x) = q\}$ for every $q \in Q$.
With the quotient map we can endow $Q$ with the quotient $\sigma$-algebra $\mathscr Q$, that is the finest $\sigma$-algebra on $Q$ for which $\Q$ is measurable. We introduce the following definition to obtain a local version of the one-dimensional localization of \cite{MR3648975}, which better fits the setting of almost-Riemannian geometry, where we have a good local description of geodesics. In \cite{MR3648975}, the authors define a global partition starting from a globally defined $1$-Lipschitz function, see Remark \ref{rmk:why_we_need_this_comment}. 

\begin{defn}
\label{def:1d_partition}
We say that a $1$-Lipschitz function $u:\Bar{\Omega} \to \R$ induces a \emph{one-dimensio\-nal partition} of $\Bar{\Omega}$ if 
\begin{enumerate}
    \item $R_u$ is an equivalence relation and $\m(\Bar{\Omega} \setminus T_u)=0$,
    \item for every $q\in Q$, the set $\gamma_q \subset \bar\Omega$ is the image of a geodesic of $(X,\di)$,
    \item for every $q\in Q$ there exists $x\in \bar\Omega$ such that $\Q(x)=q$ and $u(x)=0$.
\end{enumerate}
\end{defn}

\noindent If $u$ induces a one-dimensional partition, then, in particular, we can choose $Q=\{ u=0\}$. Indeed, the point $x\in \bar\Omega$ satisfying  $(3)$ of Definition \ref{def:1d_partition} is unique. Define the ray map 
\begin{equation}
    g: \operatorname{Dom}(g) \subset Q \times \mathbb{R} \rightarrow \Bar{\Omega}
\end{equation}
by imposing that 
\begin{equation*}
    \begin{aligned}
\operatorname{graph}(g):=  \{(q, t, x) \in Q \times\R \times \bar\Omega\mid \Q(x)=q , u(x)=t\}.
\end{aligned}
\end{equation*} 
The ray map $g$ is Borel and bijective, its inverse is
 \begin{equation}
   \Bar{\Omega} \ni  x \mapsto g^{-1}(x) := (\Q(x), u(x)).
 \end{equation}
Moreover, for every $q\in Q$ the map $t \mapsto g(q,\cdot)$ is an isometry, and consequently $\mathcal{H}^{1}\mres \gamma_q = \mathcal{H}^{1}\mres\{g(q, t)\mid t \in I_q\}=g(q, \cdot)_{\#} \Leb^{1}$, where $I_q:=\operatorname{Dom}(g(q, \cdot))$. Theorem \ref{thm:disintegration} ensures that there exists a unique strongly consistent disintegration of $\m\mres{\Omega}$:
\begin{equation*}
    \m\mres{\Omega} = \int_Q \m_q \, \de \q (q),
\end{equation*}
where $\m_q$ is a measure concentrated on $\gamma_q$ and we recall that $\q:= \mathfrak Q_\#(\m\mres{\Omega})$.

\begin{prop}\label{prop:localization}
     Let $(X, \di,\m)$ be a essentially non-branching metric measure space satisfying the $\cd(K,N)$ condition, for some $K\in \R$ and $N \in (1,\infty)$. Let $\Omega\subset X$ be open and such that $0< \m (\Omega) = \m(\Bar{\Omega})<\infty$ and let $u:\Bar{\Omega} \to \R$ be a $1$-Lipschitz function providing a one-dimensional partition. Then:
     \begin{enumerate}
         \item for $\q$-a.e. $q \in Q$, the measure $\m_q$ is absolutely continuous with respect to $\mathcal{H}^{1}\mres\gamma_q$, namely there exists $h_q: I_q\rightarrow [0,\infty]$ such that $\mathfrak{m}_{q}=g(q, \cdot)_{\#}\left(h_{q} \cdot \Leb^{1}\right)$,
         \item for $\q$-a.e. $q\in Q$, $(I_q,|\cdot|,h_q\Leb^1)$ is a $\cd(K,N)$ space. 
     \end{enumerate}
\end{prop}
\begin{proof}
The proof of this proposition can be done by adapting the classical global approach to the space $(\bar\Omega, \di ,\m \mres \Omega)$, see in particular \cite[Sec.\ 6]{cavalletti2013} for point (1) and \cite[Thm.\ 4.2]{MR3648975} for point (2). We point out that this space is not necessarily geodesic, hence we can not conclude that it satisfies the $\cd(K,N)$ condition and simply apply the known results. However, observe that, in order to deduce properties of the disintegration induced by $u$, it is enough to study Wasserstein geodesics that follow its transport rays. Since $u$ induces a one-dimensional partition in $\bar\Omega$ in the sense of Definition \ref{def:1d_partition}, all the transport rays are contained in $\bar\Omega$ and the $\cd$ condition \eqref{eq:CDcond} holds along such Wasserstein geodesics. For this reason, we can repeat the standard arguments verbatim, obtaining the result. 
\end{proof}

\begin{rmk}
\label{rmk:why_we_need_this_comment}
    
In the classical theory of \cite{MR3648975}, starting from a \emph{globally defined} $1$-Lipschitz function on a $\cd(K,N)$ essentially non-branching metric measure space, the authors build a one-dimensional partition of the whole space, up to a negligible set, and disintegrate the measure accordingly. Then, the densities in the disintegration satisfy the $\cd(K,N)$ condition, providing the one-dimensional characterization $\cd^1(K,N)$.  In this setting, there is no need for the additional properties of Definition \ref{def:1d_partition} on the $1$-Lipschitz function $u$. 

In particular, given a $1$-Lipschitz function $u:X \to \mathbb R$, we can introduce the transport relation $R_u$ and the transport set $T_u$ as before (with $X$ in place of $\bar\Omega$) and denote by $\Gamma_u(x)$ the section of $\Gamma_u$ through $x$ in the first coordinate. Then, we define the set of forward and backward branching points as 
\begin{equation}
    \begin{split}
    A^+ &:=\{x \in T_u \mid \exists y,z \in \Gamma_u(x), (y,z)\notin R_u\}\\
    A^- &:=\{x \in T_u \mid \exists y,z \in \Gamma_u^{-1}(x), (y,z)\notin R_u\}.
    \end{split}
\end{equation}
Finally, we define the non-branched transport set and the non-branched transport relation as
\begin{equation}
    T_u^{nb}:= T_u \setminus (A^+ \cup A^-), \qquad\text{and}\qquad R_u^{nb}:= R_u \cap ( T_u^{nb} \times  T_u^{nb}).
\end{equation}
On the one hand, as shown in \cite{cavalletti2013}, the essentially non-branching assumption ensures that $R_u^{nb}$ is an equivalence relation on $T_u^{nb}$ and for $\q$-a.e. $q \in Q$, $\gamma_q$ is isometric to a closed interval of $\R$. On the other hand, if $(X,\di,\m)$ also satisfies the $\cd(K,N)$ condition, the set $T_u \setminus T_u^{nb}$ is $\m$-negligible, cf. \cite[Thm.\ 3.4]{MR3648975}. It is then possible to obtain a global result, analogous to Proposition \ref{prop:localization}. 
\end{rmk}

\begin{rmk}
Note that showing \eqref{eq:disproveCD} for $K\in\R$ actually implies that $h_q$ can not be a $\cd(K,\infty)$ density. However, for a metric measure space $(X,\di,\m)$, it is not known whether the $\cd(K,\infty)$ condition can be characterized with one-dimensional disintegrations.
\end{rmk}





\section{A general strategy for disproving the \texorpdfstring{$\cd$}{CD} condition}
\label{sec:gen_strategy}

The $1$-Lipschitz function whose disintegration allows us to disprove the $\cd$ condition will be a localized version of the (signed) distance function from a hypersurface $\surf$. Indeed with this choice we are able to compute explicitly the one-dimensional marginals and to exploit the existence of characteristic points.

\subsection{Existence of normal coordinates}

Let $M$ be an \ariem manifold of dimension $n+1$. We build a convenient set of coordinates around a point in the singular region. This result can be regarded as a generalization of \cite[Prop.\ 9.8]{ABB-srgeom}.

\begin{lem}[Normal coordinates]
\label{lem:norm_coord}
Let $M$ be an \ariem manifold of dimension $n+1$, let $\Z\subset M$ the set of singular points and let $p_0\in\Z$. Then, there exists a set of coordinates $\varphi=(x,z)\colon\mathcal{U}\rightarrow M$ centered at $p_0$ and a local orthonormal frame $X_0, \dots , X_n$ for the almost-Riemannian structure on $\mathcal{U}$ such that:
    \begin{equation}
        \label{eq:loc_frame}
        X_0=\partial_x,\qquad X_i=\sum_{j=1}^na_{ij}(x,z)\partial_{z_j},\quad\forall\,i=1,\ldots,n,
    \end{equation}
where $a_{ij}$ are smooth functions in $\mathcal{U}$. Moreover, denoting by $A(x,z)=(a_{ij}(x,z))_{i.j}$, we have $\det A(0,\zero)=0$ .
\end{lem}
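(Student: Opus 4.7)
The strategy is to straighten a non-vanishing horizontal vector field into $\partial_x$ via the flow-box theorem and then eliminate the $\partial_x$-components of the remaining frame fields by a triangular shearing.

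Because the H\"ormander condition forces $\dim \D_p \geq 1$ for every $p\in M$, at least one element of any local orthonormal frame $\{Y_0, \ldots, Y_n\}$ of the almost-Riemannian structure near $p_0$ is non-zero at $p_0$; up to a constant orthogonal rotation (or simply a relabelling) we may assume $Y_0(p_0) \neq 0$, hence $Y_0$ is non-vanishing on a neighborhood of $p_0$. The classical flow-box theorem then produces a chart $\varphi = (x, z) \colon \mathcal{U} \to M$ centered at $p_0$ in which $Y_0 = \partial_x$, and each remaining vector field admits an expansion
\begin{equation*}
Y_i = b_i(x,z)\, \partial_x + \sum_{j=1}^n a_{ij}(x,z)\, \partial_{z_j}, \qquad i = 1, \ldots, n,
\end{equation*}
with smooth coefficients $b_i, a_{ij}\in C^\infty(\mathcal{U})$.

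I would then set $X_0 := Y_0 = \partial_x$ and $X_i := Y_i - b_i(x,z)\, Y_0 = \sum_j a_{ij}(x,z)\, \partial_{z_j}$ for $i \geq 1$. This is a unipotent lower-triangular change of the horizontal frame, so the tuple $\{X_0, X_1, \ldots, X_n\}$ still spans $\D$ pointwise on $\mathcal{U}$ and each $X_i$ remains a horizontal vector field; hence \eqref{eq:loc_frame} holds with the required smoothness.

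For the determinant condition, $X_0(p_0) = \partial_x$ is transverse to the coordinate hyperplane $\spn(\partial_{z_1}, \ldots, \partial_{z_n}) \subset T_{p_0} M$, whereas $X_1(p_0), \ldots, X_n(p_0)$ all lie inside it. Therefore
\begin{equation*}
\dim \D_{p_0} = 1 + \mathrm{rank}\,(X_1(p_0), \ldots, X_n(p_0)) = 1 + \mathrm{rank}\, A(0, \zero),
\end{equation*}
and since $p_0 \in \Z$ forces $\dim \D_{p_0} \leq n$, we conclude $\mathrm{rank}\, A(0, \zero) \leq n - 1$, hence $\det A(0, \zero) = 0$. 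The only delicate point is that the triangular shearing may break orthonormality of the new frame; however, the statement of the lemma only requires the algebraic form \eqref{eq:loc_frame} together with the rank condition, so this is of no concern.
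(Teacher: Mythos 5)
Your proof has a genuine gap: the lemma, read in the context it is used (and as confirmed by the $2$-dimensional analogue in Lemma~\ref{lem:2dim_coord} and by the formula $H(p_x,p_z;x,z)=\tfrac12 p_x^2+\tfrac12 p_z^\intercal A^\intercal A\, p_z$ derived from it), asserts that a local \emph{orthonormal} frame for the structure takes the form \eqref{eq:loc_frame} in the new coordinates. Your unipotent shearing $X_i := Y_i - b_i Y_0$ destroys orthonormality, and you explicitly dismiss this as ``of no concern.'' But it is exactly the concern: the downstream computation of the almost-Riemannian Hamiltonian and the block-diagonal form $\begin{psmallmatrix}1 & 0 \\ 0 & (A^\intercal A)^{-1}\end{psmallmatrix}$ of the metric both require $\{X_0,\dots,X_n\}$ to be orthonormal (i.e.\ to come from an orthonormal frame of the Euclidean bundle $\mathbb U$). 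A non-orthonormal frame of the form \eqref{eq:loc_frame} exists at essentially any point by your argument; the hard part of the lemma is precisely the orthonormal version.

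The reason the shearing cannot be salvaged by a subsequent Gram--Schmidt step is that, after straightening an \emph{arbitrary} non-vanishing horizontal field $Y_0$ by flow-box, the metric will generically not be block diagonal: $\partial_x$ will not be $g$-orthogonal to the coordinate slices $\{x=\mathrm{const}\}$, so no orthonormal frame can simultaneously have $X_0=\partial_x$ and all other $X_i$ with vanishing $\partial_x$-component. What makes the paper's argument work is that it does not straighten an arbitrary field; it straightens the normal geodesic flow from a non-characteristic hypersurface $W$ through $p_0$ (whose existence is the content of Lemma~\ref{lem:nonchar_surf}), so that $\partial_x = \nabla\delta_W$ is a unit horizontal field and the almost-Riemannian Gauss lemma forces the level sets of $\delta_W$ to be $g$-orthogonal to $\partial_x$. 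Only then can one complete $\nabla\delta_W$ to an orthonormal frame whose remaining members are tangent to the slices, and hence of the form $\sum_j a_{ij}\partial_{z_j}$. Your argument misses both the need for the non-characteristic hypersurface and the Gauss-lemma mechanism.
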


\noindent The proof of Lemma \ref{lem:norm_coord} follows from the existence of a tubular neighborhood around non-characteristic points. We require a preliminary result.

\begin{lem}
\label{lem:nonchar_surf}
Let $M$ be an \ariem manifold and let $p_0\in M$. Then, there exists a hypersurface $W\subset M$ such that $p_0\in W\setminus C(W)$.
\end{lem}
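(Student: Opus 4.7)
The plan is to distinguish the two cases $p_0\notin\Z$ and $p_0\in\Z$. If $p_0$ is a Riemannian point, then $\D_{p_0}=T_{p_0}M$ has dimension $n+1$, so for \emph{any} smooth embedded hypersurface $W$ through $p_0$ one has $\dim T_{p_0}W=n<n+1$, hence $\D_{p_0}\not\subset T_{p_0}W$ by a sheer dimension count, and $p_0\notin C(W)$. In this case one may simply take $W=\{x^{n+1}=0\}$ in any coordinate chart centered at $p_0$.

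The content of the lemma therefore sits in the singular case $p_0\in\Z$. The key claim I would establish is that $\D_{p_0}\neq\{0\}$: once a nonzero $v\in\D_{p_0}$ is available, pick a codimension-$1$ subspace $V\subset T_{p_0}M$ with $v\notin V$ (which exists since $v\neq 0$) and take $W$ to be a small smooth hypersurface through $p_0$ with $T_{p_0}W=V$, for instance the zero set of a coordinate function in a chart where $v=\partial_{x^{n+1}}$. Then $v\in\D_{p_0}\setminus T_{p_0}W$, so $p_0\notin C(W)$, as required.

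The main point is thus to rule out the degenerate possibility $\D_{p_0}=\{0\}$, and here the H\"ormander condition enters in a crucial but elementary way. Let $X_0,\dots,X_n$ be a local orthonormal frame of the \ariem structure at $p_0$, and suppose by contradiction that $X_i(p_0)=0$ for every $i=0,\dots,n$. A short computation in local coordinates shows that whenever two smooth vector fields $Y_1,Y_2$ both vanish at a point $p$, their bracket $[Y_1,Y_2]$ vanishes at $p$ as well: writing $Y_i=\sum_j a_{ij}\partial_{x_j}$, the coefficients of $[Y_1,Y_2]$ are sums of products of the $a_{ij}$ with their first derivatives, hence vanish at $p$ as soon as $a_{ij}(p)=0$. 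Iterating, every iterated Lie bracket of $X_0,\dots,X_n$ vanishes at $p_0$, so $\mathrm{Lie}_{p_0}(\D)=\{0\}$, contradicting the H\"ormander condition. This vanishing argument is the only nontrivial step of the proof; the rest is a dimension count together with the construction of a hyperplane avoiding a prescribed nonzero direction.
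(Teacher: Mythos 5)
Your proof is correct but takes a genuinely different route from the paper's. The paper argues by contradiction: it assumes that every hypersurface through $p_0$ is characteristic there, reformulates this as $\nabla u(p_0)=0$ for every smooth $u$ with $du_{p_0}\neq 0$, then applies this to the coordinate functions $u(y)=y_k$ to obtain the matrix identity $B^\intercal B(\zero)=\mathbbold{0}$, hence $B(\zero)=\mathbbold{0}$, i.e.\ all frame vectors vanish at $p_0$. At that point the paper simply \emph{asserts} that the rank of the distribution must be at least $1$ and declares a contradiction, without explaining why. You instead recognize from the outset that the whole statement reduces to the single fact $\D_{p_0}\neq\{0\}$: once a nonzero $v\in\D_{p_0}$ exists, a hypersurface whose tangent hyperplane misses $v$ is non-characteristic at $p_0$. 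And you actually prove $\D_{p_0}\neq\{0\}$, via the clean coordinate observation that the bracket of two vector fields vanishing at a point also vanishes there, so $\D_{p_0}=\{0\}$ would propagate to $\mathrm{Lie}_{p_0}(\D)=\{0\}$ and contradict the H\"ormander condition. So your argument is more direct (it avoids the detour through coordinate hyperplanes and the $B^\intercal B$ identity) and it fills in precisely the step the paper leaves implicit. The only cosmetic remark: the initial Riemannian/singular dichotomy is unnecessary, since the construction in your singular case already covers both situations once a nonzero vector in $\D_{p_0}$ is available.
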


\begin{proof}
We assume by contradiction that $p_0\in M$ is a characteristic point for any hypersurface $W\subset M$ passing through $p_0$. By definition of characteristic point, this means that $\D_{p_0}\subset T_{p_0}W$ for any such $W$. In turn, this implies that $\D_{p_0}=\{0\}$, contradicting the bracket-generating assumption.
\end{proof}


\begin{proof}[Proof of Lemma \ref{lem:norm_coord}]
Using Lemma \ref{lem:nonchar_surf}, we find an embedded hypersurface $W\subset M$ such that $p_0\in W\setminus C(W)$. We use the \ariem normal exponential map to define the desired coordinates. Indeed, let $v\in C^\infty(M)$ be a local defining function as in \eqref{eq:local_def_fun}. Then, by Lemma \ref{lem:exp_sigma}, there exist $\eps_{p_0}>0$ and a neighborhood $\mathcal{O}_{p_0}\subset W$ of $p_0$ such that 
\begin{equation}
G\colon (-\eps_{p_0},\eps_{p_0})\times\mathcal{O}_{p_0}\rightarrow M,\qquad G(s,p)=\exp_p(s\lambda(p)),
\end{equation} 
is a diffeomorphism on its image, where $\lambda(p)$ satisfies \eqref{eq:trcondition} with $2H(\lambda(p))=1$. Moreover, the local signed distance function $\delta_v$ is smooth in $G((-\eps_{p_0},\eps_{p_0})\times\mathcal{O}_{p_0})$ and
\begin{equation}
{G_*\partial_s}_{|(s,p)}=\nabla \delta_v(G(s,p)),\qquad\forall\,(s,p)\in (-\eps_{p_0},\eps_{p_0})\times\mathcal{O}_{p_0}.
\end{equation}
Thus, fixing any set of coordinates $(z_1,\ldots,z_n)$ for $\mathcal{O}_{p_0}$, and relabelling $s=x$, the coordinates $(x,z)$ satisifies \eqref{eq:loc_frame}. Finally, since $p_0\in\Z$, the vector fields $\{X_0,\ldots,X_n\}$ are linearly dependent at $p_0$, meaning that the matrix $A(x,z)$ has zero determinant at $(0,\zero)$.  
\end{proof}

\begin{rmk}
    From now on, without loss of generality, whenever we fix a set of coordinates, we will assume that the domain of the chart is the whole $\R\times\R^n$. 
\end{rmk}

\subsection{Assumptions on the \ariem structure}
\label{subsection:assumption}
Let $M$ be an \ariem manifold of dimension $n+1$, and let $\Z\subset M$ be the set of singular points. Let $p_0\in\Z$ and let $\surf\subset M$ be a hypersurface. To proceed with our general construction, we need two assumptions on the \ariem structure:  in coordinates $(x,z)\in\R\times\R^n$ centered at $p_0$ given by Lemma \ref{lem:norm_coord}, we require that

\begin{enumerate}[label=\roman*)]
    \item the hypersurface $\surf$ consists of Riemannian points except when $x=0$ and has a characteristic point at the origin, i.e. 
\begin{equation}
\label{eq:cond_hypsurf}\tag{$\mathbf{H1}$}
    \surf\cap\Z\subset \{x=0\}\quad\text{and}\quad (0,\zero)\in C(\surf);
\end{equation}
    \item let $\m$ be any smooth positive measure on $M$, then 
    \begin{equation}
    \label{eq:cond_measure}\tag{$\mathbf{H2}$}
    \m(\Z)=0.
    \end{equation}
\end{enumerate}
Here by smooth positive measure, we mean a measure with strictly positive and smooth density with respect to the Lebesgue measure in coordinates. 

\begin{rmk}
Let us comment on why we need these assumptions. The first one is necessary to have a good local description of the marginals \emph{outside} the set $\{x=0\}$, in order to exploit the presence of a characteristic point \emph{only} at the origin, cf.\ Lemma \ref{lem:exp_sigma}. The second one is necessary in order to ensure the essentially non-branching property, cf.\ Lemma \ref{lem:enb}, and to characterize the marginals in the disintegration, cf.\ \eqref{eq:density}.
\end{rmk}

\begin{rmk}
We remark that assumption \eqref{eq:cond_measure} is not always guaranteed as the next example shows. Consider $C\subset [0,1]$ a closed subset with positive Lebesgue measure and empty interior and let $f\in C^\infty(\R)$ such that $f_{|C}\equiv 0$. Then, define the structure on $\R^4$ with global orthonormal frame: 
\begin{equation}
    X_0=\partial_x,\qquad X_1=\partial_{z_1}-\frac{z_2}{2}\partial_{z_3},\qquad X_2=\partial_{z_2}+\frac{z_1}{2}\partial_{z_3},\qquad X_3=f(x)\partial_{z_3}.
\end{equation}
As one can check, the H\"ormander condition is verified and the local minimal bundle rank is always $4$, since $C$ has empty interior. Thus, the structure is almost-Riemannian. Now fix $\m=\Leb^4$, then the singular set has infinite measure indeed, by \eqref{eq:singular_set}
\begin{equation}
    \Z=\{(x,z)\in\R^4\mid \det A(x,z)=0\}=\{(x,z)\in\R^4\mid f(x)=0\}=C\times\R^3.
\end{equation}
Here, the matrix $A$ is given by
\begin{equation}
    A(x,z)=
    \begin{pmatrix}
    1& & \\
     &1& \\
    -\frac{z_2}{2}&\frac{z_1}{2}&f
    \end{pmatrix}.
\end{equation}
In an analogue way, one can build an example where \eqref{eq:cond_hypsurf} is not verified. Indeed, in the construction above, it is enough to consider a closed set $C$ with empty interior and with an accumulation point at the origin. Then, the hypersurface $\surf=\{z_3=0\}$ has a characteristic point at the origin, however it intersects the singular region in $C\times\R^2$. 
\end{rmk}

Notice that, in coordinates \eqref{eq:loc_frame}, the singular set can be described by the matrix $A=(a_{ij})_{i,j}$, indeed
\begin{equation}
\label{eq:singular_set}
(x,z)\in\Z\quad\text{if and only if}\quad\det A(x,z)=0.
\end{equation}
In particular, along the hypersurface $\surf$, by \eqref{eq:cond_hypsurf}, we have 
\begin{equation}
\label{eq:riem_points_surf}
    x\neq 0\quad\Rightarrow\quad  \det A(x,z_1,\ldots,z_{n-1},0)\neq 0,
\end{equation}
since the set $\surf\cap\{x\neq 0\}$ consists of Riemannian points. As a consequence, since a Riemannian point is never a characteristic one, $C(\surf)\subset \{x=0\}$. Actually, it is always possible to ensure that $\surf=\{z_n=0\}$ satisfies $ (0,\zero)\in C(\surf)$, so the only condition one should check is \eqref{eq:riem_points_surf}.

\begin{lem}
\label{lem:nice_hypersurf}
Let $M$ be an \ariem manifold and let $p_0\in\Z$. Then, there exists an hypersurface $\surf\subset M$ such that $p_0\in C(\surf)$. Moreover, in coordinates $(x,z)$ as in \eqref{eq:loc_frame}, up to a rotation, we can choose $\surf=\{z_n=0\}$. 
\end{lem}

\begin{proof}
Assume by contradiction that $p_0\in M$ is not a characteristic point for every hypersurface $W\subset M$ passing through $p_0$. Then, by definition of characteristic point, we deduce that $\D_{p_0}$ must be transversal to $T_{p_0}W$, for every such $W$, or equivalently
\begin{equation}
\D_{p_0}+T_{p_0}W=T_{p_0}M,     
\end{equation}
for every $W\subset M$ passing trough $p_0$. As a consequence, $\D_{p_0}=T_{p_0}M$ and thus $r(p_0)=n+1$. This gives a contradiction, since $r(p_0)<n+1$, being $p_0\in \Z$. Let us show that in coordinates $\surf$ can be chosen as $\{z_n=0\}$: since $\det A(p_0)=0$, there exists an invertible matrix $M\in \mathrm{GL}(n+1,\R)$ such that the last column of the matrix $A(p_0)M$ consists of zeroes. Then, we introduce the following change of coordinates
\begin{equation}
\psi\colon (x,z)\mapsto (x,\tilde z)=(x,M^\intercal z).
\end{equation} 
In the new coordinates, the generating family for the distribution has the following expression:
\begin{equation}
X_0=\partial_x,\qquad X_i=\sum_{j=1}^n a_{ij}(x,z)\partial_{z_j}=\sum_{k,j=1}^n a_{ij}(x,\psi^{-1}(z))m_{jk}\partial_{\tilde z_k},
\end{equation} 
having denoted by $M=(m_{ij})_{i,j}$. Thus, \eqref{eq:loc_frame} is still valid and, when evaluated at $p_0$, the matrix describing the generating family has the last column consisting of zeroes. Finally, this implies that the hypersurface $\surf=\{\tilde z_n=0\}$ has a characteristic point at $p_0$. Indeed, 
\begin{equation}
\nabla \tilde z_n(p_0)=\sum_{i=1}^n X_i(\tilde z_n)X_i(p_0)=\sum_{i,j=1}^n a_{ij}(0,\zero)m_{jn}X_i(p_0)=\sum_{i=1}^n (A(p_0)M)_{in}X_i(p_0)=0,
\end{equation}
since the last column of the matrix $A(p_0)M$ is zero, implying that $\D_{p_0}\subset T_{p_0}\surf$. 
\end{proof}

\begin{lem}\label{lem:enb}
Let $M$ be an \ariem manifold, equipped with a smooth positive measure $\m$ and satisfying assumptions \eqref{eq:cond_measure}. Then $(M,\di,\m)$ is essentially non-branching.
\end{lem}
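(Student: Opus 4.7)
The plan is to leverage assumption \eqref{eq:cond_measure} to confine every optimal dynamical plan to minimizing geodesics starting at Riemannian points, and then to prove via ODE uniqueness that two such geodesics cannot share a common initial subarc.

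First, I would take arbitrary $\mu_0,\mu_1\in\Prob_2(M)$ with $\mu_0,\mu_1\ll\m$ and any $\eta\in\Prob(\Geo(M))$ representing a $W_2$-geodesic from $\mu_0$ to $\mu_1$. Since $(e_0)_\#\eta=\mu_0$ and $\mu_0(\Z)=0$ by \eqref{eq:cond_measure}, the measure $\eta$ is automatically concentrated on
\begin{equation}
G:=\{\gamma\in\Geo(M)\,:\,\gamma(0)\in M\setminus\Z\},
\end{equation}
so it would suffice to show that $G$ itself is a non-branching set.

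The core step is a uniqueness argument for the normal Hamiltonian lift at Riemannian starting points. I would fix $\gamma\in G$ and set $p:=\gamma(0)\notin\Z$, so that $\D_p=T_pM$. Any putative abnormal lift $\lambda$ of $\gamma$ would satisfy \eqref{eq:abn} and hence $\langle\lambda(0),T_pM\rangle=0$, forcing $\lambda(0)=0$ and contradicting $\lambda\neq 0$; therefore $\gamma$ is strictly normal and is the projection of a smooth solution to the Hamilton equations \eqref{eq:Hamiltoneqs}. Moreover, at the Riemannian point $p$ the local frame $\{X_0,\dots,X_n\}$ is a basis of $T_pM$, so the quadratic form $H|_{T_p^*M}$ in \eqref{eq:Hamiltonian} is positive-definite and the Legendre transform $\lambda\mapsto\partial H/\partial\lambda$ defines a linear isomorphism $T_p^*M\to T_pM$. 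Consequently, the initial covector of $\gamma$ is uniquely recovered from $\dot\gamma(0)$.

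Finally, I would argue by contradiction: suppose two distinct $\gamma_1,\gamma_2\in G$ satisfy $\rest{0}{t}\gamma_1=\rest{0}{t}\gamma_2$ for some $t\in(0,1)$. Unwinding the restriction operator gives $\gamma_1\equiv\gamma_2$ on $[0,t]$, so $p:=\gamma_1(0)=\gamma_2(0)\in M\setminus\Z$ and $\dot\gamma_1(0)=\dot\gamma_2(0)$. The previous paragraph then yields $\lambda_1(0)=\lambda_2(0)$, and uniqueness of solutions to \eqref{eq:Hamiltoneqs} propagates this equality to all of $[0,1]$, contradicting $\gamma_1\neq\gamma_2$. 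The only delicate point in the plan is the exclusion of abnormal lifts and the uniqueness of the normal lift at Riemannian points; once these are in place, the non-branching of $G$ is immediate from standard ODE theory, and \eqref{eq:cond_measure} is exactly what makes $\eta$ concentrated on $G$.
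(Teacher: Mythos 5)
Your proof is correct and reaches the same conclusion, but it takes a genuinely different route from the paper's. The paper, after showing (just as you do) that geodesics with Riemannian endpoints cannot be abnormal and are therefore strictly normal, invokes the external result \cite[Cor.\ 6]{MR4153911}, which states that a strictly normal geodesic branches at positive times if and only if it contains a non-trivial abnormal subsegment starting at time $0$; since $\gamma(0)\notin\Z$ such a subsegment would lie in $\Z$, giving a contradiction. You instead replace that cited result with a direct, self-contained ODE-uniqueness argument: at a Riemannian starting point $p\notin\Z$ the distribution is the whole tangent space, the fiberwise differential of $H$ restricted to $T_p^*M$ is a linear isomorphism onto $T_pM$, so the initial covector of the (necessarily normal) Pontryagin lift is recovered from $\dot\gamma(0)$, and uniqueness of solutions to the Hamilton equations forbids two such geodesics from sharing an initial arc. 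This is more elementary and makes the lemma self-contained; it also only uses $\mu_0(\Z)=0$, i.e.\ it suffices to confine $\eta$ to $\{\gamma:\gamma(0)\notin\Z\}$, whereas the paper also imposes $\gamma(1)\notin\Z$ (a requirement that is in fact not needed for the argument to close). The trade-off is that your argument is specific to initial points where the Legendre transform is invertible — which is exactly what the almost-Riemannian setting plus \eqref{eq:cond_measure} provides — whereas the cited result from \cite{MR4153911} is formulated for general sub-Riemannian strictly normal geodesics.
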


\begin{proof}
Let $\gamma\colon[0,1]\rightarrow M$ be a minimizing geodesic. Then $\gamma$ is abnormal if there exists an abnormal extremal lift $\lambda(t)\neq 0$, satisfying \eqref{eq:abn}. But this implies that
\begin{equation}
    \gamma(t)\subset\Z,\qquad\forall\,t\in [0,1].
\end{equation}
Hence, if $\gamma$ is a minimizing geodesic with endpoints in the Riemannian region, i.e.
\begin{equation}
\label{eq:endpoints}
    \gamma(0),\gamma(1)\in M\setminus\Z,
\end{equation}
then $\gamma$ must be strictly normal. As showed in \cite[Cor.\ 6]{MR4153911}, a strictly normal geodesic $\gamma \colon[0,1]\rightarrow M$ is branching for some positive time $t\in (0,1)$ if and only if it contains a non-trivial abnormal subsegment that starts at time $0$. Thus, a minimizing geodesic satisfying \eqref{eq:endpoints} can not branch for positive times since $\Z$ is closed. Now, let $\eta\in \Prob(\Geo(X))$ be a $W_2$-geodesic joining the measures $\mu_0,\mu_1\in \Prob_2(X)$, which are absolutely continuous with respect to the reference measure $\m$ ($\mu_0,\mu_1 \ll \m$). In particular, notice that $(e_0)_\# \eta = \mu_0$ and $(e_1)_\# \eta = \mu_1$ and therefore, by \eqref{eq:cond_measure},
\begin{equation}
    \eta(e_0^{-1}(\Z))= \mu_0(\Z)= 0 \quad \text{ and } \quad \eta(e_1^{-1}(\Z))= \mu_1(\Z)= 0 .
\end{equation}
Consequently, the measure $\eta$ is concentrated on $\Geo(X) \setminus \big(e_0^{-1}(\Z)  \cup e_1^{-1}(\Z)\big)$, which is a non-branching set of geodesics, according to the first part of the proof.
\end{proof}

\begin{rmk}
Notice that it is possible to build examples of almost-Riemannian manifolds where \eqref{eq:cond_measure} is verified but there exist branching geodesics. Indeed, consider $\R^4$, with the global orthonormal frame given by 
\begin{equation}
    X_0=\partial_x,\qquad X_1=\partial_{z_1},\qquad X_2=\partial_{z_2}+B(z_1,z_2)\partial_{z_3},\qquad X_3=x\partial_{z_3},
\end{equation}
where $B$ is a smooth magnetic potential, defined as in \cite{MR4153911}, namely
\begin{equation}
    B(z_1,z_2)=z_1\theta(z_2)+z_2^2\theta(1-z_2)
\end{equation}
and $\theta\in C^\infty(\R)$ such that $0\leq\theta\leq 1$, $\theta(r)=0$ for $r\leq 0$ and $\theta(r)=1$ for $r\geq 1$. In this situation, we have strictly normal branching geodesic in the singular region. Nevertheless, thanks to Lemma \ref{lem:enb}, $(M,\di,\m)$ is essentially non-branching. On the other hand, if the measure of the singular set is positive, it is unclear whether an \ariem manifold is essentially non-branching.
\end{rmk}

\subsection{Choice of the local disintegration}
 Let us fix coordinates $(x,z)\in\R\times\R^n$ as in \eqref{eq:loc_frame}. Let $\m$ be a smooth positive measure on $M$ and let $\surf=\{z_n=0\}$ be defined as Section \ref{subsection:assumption}. Under the assumptions \eqref{eq:cond_hypsurf} and \eqref{eq:cond_measure}, we consider the signed distance function $\delta_v$ from $\surf$, as defined in \eqref{eq:dist_surf}, with $v:=z_n$. For ease of notation, we denote $\delta_v$ simply by $\delta$. By triangle inequality, $\delta$ is always $1$-Lipschitz on $M$ with respect to $\di$. However, $\delta$ develops singularities at a characteristic point, indeed it is only H\"older (and not Lipschitz) with respect to the Euclidean distance of the chart, see \cite[Thm.\ 4.2]{MR3741391}. Roughly speaking, such a singularity is related to the fact that the horizontal gradient of $\delta$, which exists almost everywhere (see, \cite[Thm.\ 8]{FHK99}), becomes tangent to $\surf$ as the base point approaches a characteristic point. Our idea is to exploit this behavior to prove that the disintegration associated with $\delta$ does not produce $\cd(K,N)$ densities along the transport rays, for any $K\in\R$, $N\geq 1$. 

Starting from $\delta$, we build a suitable open and bounded set $\Omega$ and we consider the local disintegration of $\m\mres\Omega$ induced by $\delta$, cf. Section \ref{subsection:onedimCD}. Set $B:=B_r((0,\zero))$ for some $r>0$ and define the open and bounded set
\begin{equation}
    \Omega := \bigcup_{q_0\in (\surf\setminus C(\surf))\cap B} \gamma_{q_0},\qquad\text{where}\quad\gamma_{q_0}=G((-f(q_0),f(q_0))\times\{q_0\}),
\end{equation}
where $G$ is the map defined in Lemma \ref{lem:exp_sigma} and $f\in C^\infty(\surf)$ such that $0<f(q_0)<\eps_{q_0}$, for every $q_0\in\surf\setminus C(\surf)$. Note that $G$ is a local diffeomorphism on $\Omega$ and, with this choice of $f$, $\m(\Omega)=\m(\bar\Omega)$. 

Then, $\delta$ is a $1$-Lipschitz function on $\bar \Omega$ inducing a one-dimensional partition in the sense of Definition \ref{def:1d_partition}. Indeed $Q=\{\delta=0\}\cap\bar\Omega=\surf\cap \bar B$ and, for every $q\in Q$, the transport ray $\gamma_q$ of the disintegration coincides with the minimizing geodesics for $\delta$, which exist by completeness. Moreover, $T_\delta=\bar\Omega\setminus C(\surf)$ and therefore, $\m(\bar\Omega\setminus T_\delta)=0$. The quotient map $\mathfrak{Q}\colon \bar\Omega\rightarrow Q$ can be regarded as a projection on the foot of a geodesic\footnote{For any $p\in\bar\Omega$, there exists a unique point ${\mathsf f}(p)\in\surf\cap\bar\Omega$
for which $|\delta(p)| = \di(p, {\mathsf f}(p))$}, thus $\mathfrak{Q}$ is the inverse of the exponential map, namely 
\begin{equation}
    G\left(\delta(p),\mathfrak{Q}(p)\right)=p,\qquad\forall\,p\in\bar\Omega,
\end{equation} 
and $G$ is indeed the ray map associated to the partition. Finally, since $\Omega$ is defined by the smooth function $f$, the measure $\q= \mathfrak{Q}_\# (\m \mres \Omega)$ is smooth on $\surf$.

\subsection{Coordinate expression for the marginals in the disintegration}

Using Lem\-ma \ref{lem:exp_sigma} and, in particular, the diffeomorphism \eqref{eq:diffeo_G}, we can conveniently represent the one-dimensional densities in the disintegration. Indeed, consider a Riemannian point $q_0=(\bar x,\bar z)\in\surf$, with $\bar z_n=0$. In particular, thanks to \eqref{eq:riem_points_surf}, it is enough to assume $\bar x\neq 0$. Then, for every Borel set $C\subset \Omega$, on the one hand we have that
\begin{equation*}
    \int_{\Omega\cap C} \de \m =\int_{\surf}\int_{-f(q)}^{f(q)}\chi_{ G^{-1}(C)} \de (G^*(\m\mres{\Omega})),
\end{equation*}
while, on the other hand, making the disintegration explicit and recalling that $G$ is the ray map, we conclude that
\begin{align}
\int_{\Omega\cap C} \de \m &=\int_{\surf}\m_q(\Omega\cap C)\de \q \\
&= \int_{\surf}\int_{-f(q)}^{f(q)}\chi_C(G(s,q)) \de \m_q\de \q =\int_{\surf}\int_{-f(q)}^{f(q)}\chi_{G^{-1}(C)}(s,q) h_q(s)\de s \de \q.
\end{align}
Thus, having fixed a frame for $T\surf$, say $\{v_1,\ldots,v_n\}$, the density $h_q(s)$ is given by: 
\begin{equation}
\label{eq:density}
\begin{split}
h_q(s) &=h_q(s)ds(\partial_s)\frac{\q(v_1,\ldots,v_n)}{\q(v_1,\ldots,v_n)}=\frac{G^*\m(\partial_s,v_1,\ldots,v_n)}{\q(v_1,\ldots,v_n)}\\
					 &=\frac{\m(G_*\partial_s,G_*v_1,\ldots,G_*v_n)}{\q(v_1,\ldots,v_n)}=\frac{\m(\nabla\delta,G_*v_1,\ldots,G_*v_n)}{\q(v_1,\ldots,v_n)},
\end{split}
\end{equation}
for any $(s,q)\in G^{-1}(\Omega)$, and having used \eqref{eq:grad_delta} in the last equality.

\begin{rmk}
\label{rmk:reg_prop_h}
Notice that, from \eqref{eq:density}, the one-dimensional densities $h_q(s)$ are smooth functions of $(s,q)\in G^{-1}(\Omega)$. Moreover, they do not depend on the choice of coordinates. 
\end{rmk}

We are going to study the second logarithmic derivative of $h_q(s)$, at $s=0$ and as $q\to0$, in order to obtain a contradiction with the differential characterization of Lemma \ref{lem:diff_char}. Firstly, notice that since we are performing derivatives in $s$, we can disregard constant functions in $s$. Secondly, by definition $\m$ is a smooth positive measure, i.e.
\begin{equation}
\label{eq:smooth_density}
\m=m(x,z)dxdz,\qquad \text{with}\quad m\in C^\infty(\R\times\R^n),\quad c\leq m\leq C,
\end{equation}
for some $C,c>0$. Moreover, in \eqref{eq:density}, as a frame for $T\surf$, we can choose the vector fields $\{\partial_x,\partial_{z_1},\ldots,\partial_{z_{n-1}}\}$. In conclusion, we obtain the following expression for the one-dimensional density associated with the disintegration:
\begin{equation}
\label{eq:def_Bq}
    h_q(s)\propto m(G(s,q))dxdz(\nabla\delta,G_*\partial_x,\ldots,G_*\partial_{z_{n-1}})_{|(s,q)}.
\end{equation}
Then, defining the matrix
\begin{equation}
    B_q(s):=(\nabla\delta\mid G_*\partial_x\mid \ldots\mid G_*\partial_{z_{n-1}}),
\end{equation}
where the columns are expressed in coordinates $\{\partial_x,\ldots,\partial_{z_n}\}$, the second logarithm derivative at $s=0$ is given by: 
\begin{equation}
\label{eq:log_derivative}
    \begin{split}
        \left(\log(h_q(s))\right)''_{|s=0} &= \left(\log(m(G(s,q))\right)''_{|s=0}+\left(\log\det( B_q(s)))\right)''_{|s=0} \\
        &= \left(\log(m(G(s,q))\right)''_{|s=0}+\mathrm{tr}\left(-(B_q^{-1}(0)B_q'(0))^2+B_q^{-1}(0)B_q''(0)\right),
    \end{split}
\end{equation}
having used Jacobi formula for the determinant of a smooth curve of invertible matrices:
\begin{equation}
    \det(B(s))'=\det(B(s))\mathrm{tr}\left(B^{-1}(s)B'(s)\right).
\end{equation}

\begin{rmk}
We stress that, for any $q\in Q\setminus C(\surf)$, $h_q(\cdot)$ is defined on an open interval $I_q$ containing $0$. Thus, the derivative in \eqref{eq:log_derivative} makes sense. 
\end{rmk}

 \subsection{Computations for the matrix \texorpdfstring{$B_q(s)$}{Bq(s)}}
Proceeding with hindsight, we analyze the term of \eqref{eq:log_derivative} involving $B_q(s)$, as in general it will be more singular than the other one. We expand in $s$ its columns and we deduce an expression for the coefficients of the expansion, using the \ariem Hamiltonian system.

\subsubsection{An expression for the trace term in \eqref{eq:log_derivative}}

We look for an explicit expression for the matrix $B_q(s)$. We may regard $\nabla\delta\in \R^{n+1}$ and $G\colon\R\times\R^n\rightarrow\R^{n+1}$, therefore expanding in $s$, there exists smooth functions $\f,\h\in C^\infty(\surf\cap \Omega)$ such that,
\begin{equation}
\label{eq:def_fh}
\begin{split}
G(s,q) &=q+\nabla\delta(q)s+\frac{1}{2}\f(q)s^2+o(s^2),\\
\nabla\delta(G(s,q)) &=\nabla\delta(q)+\f(q)s+\frac{1}{2}\h(q)s^2+o(s^2),
\end{split}
\end{equation}
as $s\to0$. The relation between the two expansions comes from \eqref{eq:grad_delta}. Therefore, we obtain the following formulas for $B_q(s)$ and its derivatives at $s=0$: for the zero order term, we have
\begin{equation}
B_q(0)=\left(\nabla\delta\mid \partial_x\mid \partial_{z_1}\mid\ldots\mid \partial_{z_{n-1}}\right)_{|q}=
\begin{pNiceArray}{c|cc}[margin]
& \Block{2-2}{\mathrm{Id}_{n\times n}} \\
\nabla\delta(q)  & & \\
\cline{2-3}
& 0 \hspace{0.25cm}\cdots & 0
\end{pNiceArray}.
\end{equation}
For the first derivative of $B_q(s)$, we differentiate component by component. Notice that we have to take into account the quantities $\partial_{z_i}G(0,q)$, with $i=0,\ldots,n-1$\footnote{Here and below, with a slight abuse of notation, for $i=0$, we set $z_0=x$ and $\partial_{z_0}=\partial_x$.}, therefore we have to differentiate the expansion \eqref{eq:def_fh}, namely: 
\begin{equation}
    \partial_{z_i}G(s,q)=\partial_{z_i}+\partial_{z_i}\nabla\delta(q)s+\frac{1}{2}\partial_{z_i}\f(q)s^2+o(s^2),
\end{equation}
as $s\to0$, for any $i=0,\ldots,n-1$, where the derivatives have to be interpreted component by component. Therefore, we obtain: 
\begin{equation}
B_q'(0)=\left(\f\mid \partial_x\mid \partial_{z_1}\mid\ldots\mid \partial_{z_{n-1}}\right)_{|q}+\sum_{i=0}^{n-1}\left( \nabla\delta\mid \partial_x\mid\ldots\mid \partial_{z_i}\nabla\delta\mid\ldots\mid\partial_{z_{n-1}} \right)_{|q}.
\end{equation}
Analogously, we can deduce the expression for the second-order derivative of $B_q(s)$ at $s=0$:
\begin{align}
B_q''(0) &=\left(\h\mid \partial_x\mid \partial_{z_1}\mid\ldots\mid \partial_{z_{n-1}}\right)_{|q}+\sum_{i=0}^{n-1}\left( \nabla\delta\mid \partial_x\mid\ldots\mid \partial_{z_i}\f\mid\ldots\mid\partial_{z_{n-1}} \right)_{|q}
\\
&\quad+2\sum_{i=0}^{n-2}\sum_{j=i+1}^{n-1}\left( \nabla\delta\mid \partial_x\mid\ldots\mid \partial_{z_i}\nabla\delta\mid\ldots\mid\partial_{z_j}\nabla\delta\mid\ldots\mid\partial_{z_{n-1}} \right)_{|q}
\\
&\quad+2\sum_{i=0}^{n-1}\left( \f\mid \partial_x\mid\ldots\mid \partial_{z_i}\nabla\delta\mid\ldots\mid\partial_{z_{n-1}} \right)_{|q}.
\end{align}
Inserting the above formulas in the trace term in \eqref{eq:log_derivative}, we obtain the desired expression, in terms of the quantities $\nabla\delta$, $\f$ and $\h$.

\subsubsection{Explicit expression for $\nabla\delta$, $\f$ and $\h$}
In order to obtain an explicit expression for $\nabla\delta$, $\f$ and $\h$, we study the Hamiltonian system associated with the \ariem Hamiltonian 
\begin{equation}
H(\lambda)=\frac{1}{2}\sum_{i=0}^n\langle \lambda,X_i\rangle^2,\qquad\forall\,\lambda\in T^*M,
\end{equation}
where $\{X_0,\ldots,X_n\}$ is the local orthonormal frame for the distribution defined in \eqref{eq:loc_frame}. 
In coordinates $(x,z)$, the almost-Riemannian metric $g$ on the Riemannian region is represented by the matrix
\begin{equation}
\begin{pNiceArray}{c|cc}[margin]
1 & &  \\
\hline
& \Block{2-2}{(A^\intercal A)^{-1}}\\
& &\hspace*{1.12cm}
\end{pNiceArray}.
\end{equation}
Therefore the Hamiltonian in canonical coordinates induced by $(x,z)$ is
\begin{equation}
H(p_x,p_z;x,z)=\frac{1}{2}p_x^2+\frac{1}{2}p_z^\intercal A^\intercal A(x,z)p_z,
\end{equation}
where $p_z$ is a shorthand for $(p_{z_1},\ldots,p_{z_n})$. The Hamiltonian system then becomes
\begin{equation}
\label{eq:ham_sys}
\left\{\begin{aligned}
\dot x &=\frac{\partial H}{\partial p_x}=p_x &\qquad& \dot p_x =-\frac{\partial H}{\partial x}=-\frac{1}{2}p_z^\intercal \partial_x\left(A^\intercal A\right)p_z\\
\dot z &=\frac{\partial H}{\partial p_z}=A^\intercal Ap_z &\qquad& \dot p_z=-\frac{\partial H}{\partial z}=-\frac{1}{2}p_z^\intercal \partial_z\left(A^\intercal A\right)p_z
\end{aligned}
\right.
\end{equation}
From Lemma \ref{lem:exp_sigma} (cf. also Remark \ref{rmk:min_geod}) we know that the unique minimizing geodesic for $\delta$ with initial point $q\in\surf\cap\Omega$ has unique (up to a sign) initial covector such that:
\begin{equation}
\label{eq:tr_cond}
\langle\lambda(q),T_q\surf\rangle=0\qquad\text{and}\qquad 2H(\lambda(q))=1. 
\end{equation}
Since $T_q\surf=\spn\{\partial_x,\partial_{z_1},\ldots,\partial_{z_{n-1}}\}$, the first condition in \eqref{eq:tr_cond} implies that $\lambda(q)=p_{z_n}dz_n$. In addition, the second condition in \eqref{eq:tr_cond} forces $\lambda(q)$ to be of the form:
\begin{equation}
\label{eq:covector_coord}
\lambda(q)=\frac{1}{\beta(q)}dz_n,\qquad\text{with}\quad\beta(q)^2=\sum_{k=1}^n a_{kn}(q)^2,
\end{equation}
where we choose $\beta$ to be positive. Thus, denoting by $(x^q(s),z^q(s);p_x^q(s),p_z^q(s))$ the solution to \eqref{eq:ham_sys} with initial datum $(\lambda(q);q)$, the minimizing geodesic for $\delta$ starting at $q$ is given by:
\begin{equation}
\label{eq:min_geod2}
I_q\ni s\mapsto G(s,q)=(x^q(s),z^q(s)),
\end{equation}
where $I_q$ is an open interval containing the origin. By \eqref{eq:grad_delta} and \eqref{eq:min_geod2}, we deduce that 
\begin{equation}
\label{eq:beta_H}
    \nabla\delta(q)=\partial_sG_{|(0,q)}=\left(\dot x^q(0),\dot z^q(0)\right).
\end{equation}
Computing derivatives along $s$ of the equality \eqref{eq:grad_delta} and recalling the definition of $\f,\h$ in \eqref{eq:def_fh}, we analogously obtain higher-order expression in terms of the solution to \eqref{eq:ham_sys}, precisely:
\begin{equation}
\label{eq:fh_H}
\f(q)=\left(\ddot x^q(0),\ddot z^q(0)\right),\qquad\h(q)=\left(\dddot x^q(0),\dddot z^q(0)\right).
\end{equation}
We refer to Appendix \ref{app:ham_sys} for the explicit expression of $\f$ and $\h$. 

\subsection{Contradicting the \texorpdfstring{$\cd$}{CD} condition}
The idea is to exploit the presence of a characteristic point for $\surf$ at the origin to conclude that
\begin{equation}
\label{eq:secdertoinfty}
    \left(\log(h_q(s))\right)''_{|s=0}\xrightarrow{(x,\zero)\to(0,\zero)}+\infty,
\end{equation} 
proving \eqref{eq:disproveCD} for every $K\in \R$, up to taking $x$ sufficiently small. Keeping in mind \eqref{eq:log_derivative}, we anticipate that the term providing the desired pathology will be 
\begin{equation}
    \left(\log\det( B_q(s))\right)''_{|s=0}.
\end{equation} 
Observe that, according to Section \ref{sec:cd}, in order to disprove the $\cd(K,N)$ condition for every $K\in \R$ and $N\in (1,+\infty)$ we need to show that, given any $K\in \R$, it holds 
\begin{equation}
    \left(\log(h_q(s))\right)''_{|s=0}>- K,
\end{equation}
for every $q$ in a $\q$-positive set. However, since the function $(s,q)\to h_q(s)$ is smooth (see Remark \ref{rmk:reg_prop_h}) it is sufficient to prove \eqref{eq:secdertoinfty}. 

Notice that the initial covector for the minimizing geodesic from $\surf$ in \eqref{eq:covector_coord} is singular at $q= (0,\zero)$. More precisely, since $(0,\zero)\in C(\surf)$ and $\surf$ is the level set of $v(x,z)=z_n$, we have that
\begin{equation}
   0=\nabla {z_n}_{|(0,\zero)}=\sum_{i=1}^n X_i(z_n){X_i}_{|(0,\zero)}=\sum_{i,j=1}^n a_{ij}(0,\zero)a_{in}(0,\zero)\partial_{z_j},
\end{equation}
meaning that the function $\beta(q)$ defined in \eqref{eq:covector_coord} vanishes at $q$ \emph{if and only if} $q\in C(\surf)$. In particular, it vanishes at the origin, making the initial covector singular at $q= (0,\zero)$. Moreover, solving the Hamiltonian system, we deduce that
\begin{equation}
\label{eq:nabla_delta_coord}
    \nabla\delta(q)=(\dot x^q(0),\dot z^q(0))=(0,A^\intercal A\beta^{-1}(q)\partial_{z_n})=\left(0,\beta^{-1}(q)\sum_{k=1}^n a_{ki}(q)a_{kn}(q)\right).
\end{equation}

\begin{rmk}
\label{rmk:singularity_delta}
On the one hand, all the components of $\nabla\delta(q)$, but the first and last, are singular at $q= (0,\zero)$, as fast as the initial covector \eqref{eq:covector_coord}. On the other hand, since the last component of $\nabla\delta(q)$ is exactly $\beta(q)$ which tends to $0$ as $q\to(0,\zero)$, formally $\nabla\delta(q)$ becomes tangent to $\surf$ at the characteristic point.   
\end{rmk}

\noindent In particular, we see that $\nabla\delta(q)$ is singular at the origin and the same goes for the functions $\f(q)$, $\h(q)$. Replacing their explicit expressions in \eqref{eq:log_derivative}, we will be able to prove \eqref{eq:secdertoinfty}.

\begin{rmk}
The procedure described in this section for disproving the $\cd$ condition is constructive and the algorithm has been implemented in the software \emph{Mathematica}. The code is available online, see \cite{script}.
\end{rmk}

\section{2-dimensional almost-Riemannian manifolds do not satisfy \texorpdfstring{$\cd$}{CD}}

In this section, we apply our general strategy to show that $2$-dimensional \ariem manifolds do not satisfy any curvature-dimension condition. The reason why we are able to perform explicit computations is related to the better regularity properties of $\delta$, when $\dim M=2$, cf.\ Remark \ref{rmk:singularity_delta}. 

Let $M$ be an almost-Riemannian manifold of dimension $2$, with non-empty singular region $\Z\subset M$. We recall the following local description of a general $2$-dimensional almost-Riemannian manifold which holds \emph{without} any assumption on the structure of the singular set, see \cite[Lem.\ 17]{MR2379474}. 

\begin{lem}
\label{lem:2dim_coord}
Let $M$ be an \ariem manifold. Then, for every point $q_0\in M$, there exists a set of coordinates $\varphi=(x,z)\colon\mathcal{U}\rightarrow M$, centered at $q_0$, such that a local orthonormal frame for the distribution is given by 
\begin{equation}
    X=\partial_x,\qquad Y=f(x,z)\partial_z.
\end{equation}
where $f\colon\mathcal{U}\rightarrow \R$ is a smooth function. Moreover, 
\begin{enumerate}[label=\roman*)]
\item the integral curves of $X$ are normal extremals, as in \eqref{eq:Hamiltoneqs};
\item let $s$ be the step of the structure at $q_0$. If $s = 1$ then $f(0,0) \neq 0$. If $s\geq 2$, we have 
\begin{equation}
\label{eq:vanishing_cond}
    f(0,0)=0,\ \ldots,\  \frac{\partial^{s-2}f}{\partial x^{s-2}}(0,0)=0,\ \frac{\partial^{s-1}f}{\partial x^{s-1}}(0,0)\neq 0.
\end{equation}
\end{enumerate}
\end{lem}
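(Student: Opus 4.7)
The plan is to construct the required coordinates by following the \ariem exponential map from a smooth transversal curve through $q_0$ and then read off both the orthogonal frame and the vanishing conditions from the Lie algebra generated by $\{X, Y\}$.

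First, the bracket-generating assumption $\mathrm{Lie}_{q_0}(\D) = T_{q_0}M$ forces the distribution to be nonzero at $q_0$: if both vector fields of a local orthonormal frame vanished at $q_0$, so would every iterated Lie bracket. Hence $\D_{q_0}$ contains a unit horizontal vector $v_0$. I would pick a smooth curve $\beta\colon (-\eps, \eps) \to M$ with $\beta(0) = q_0$ and $\dot\beta(0)$ transverse to $v_0$, together with a smooth section $\lambda(z)$ of $T^*M$ along $\beta$ satisfying the transversality condition $\langle \lambda(z), \dot\beta(z)\rangle = 0$, the normalization $2H(\lambda(z)) = 1$, and such that the initial velocity of the normal geodesic $s \mapsto \srexp_{q_0}(s\lambda(0))$ is $v_0$. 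Setting $\varphi(x, z) := \srexp_{\beta(z)}(x\lambda(z))$, an argument analogous to Lemma \ref{lem:exp_sigma} (with $\beta$ playing the role of the hypersurface) shows that $\varphi$ is a diffeomorphism from a neighborhood of $(0,0)$ onto a neighborhood $\mathcal{U}$ of $q_0$ for $\eps$ small enough. By construction, $\partial_x$ coincides with the tangent to the normal geodesic $s \mapsto \srexp_{\beta(z)}(s\lambda(z))$, which has unit sub-Riemannian speed; this gives $X = \partial_x$ and proves (i).

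For the second frame element, the transversality $\langle \lambda(z), \dot\beta(z)\rangle = 0$ at $x = 0$, combined with the fact that $\varphi(\cdot, z)$ is a normal geodesic, yields a sub-Riemannian Gauss-type identity $g(\partial_x, \partial_z) \equiv 0$ on the Riemannian region of $\mathcal{U}$: differentiate $g(\partial_x, \partial_z)$ along each $s \mapsto \varphi(s, z)$ and use the Hamilton equations, mirroring the Riemannian Fermi-coordinate computation. Hence, wherever $g$ is non-degenerate, $\partial_z$ is $g$-orthogonal to $\partial_x$, which forces the (up to sign) unique unit element of $\D$ orthogonal to $\partial_x$ to be a scalar multiple of $\partial_z$. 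We therefore obtain $Y = f(x, z)\partial_z$, with $f$ smooth on $\mathcal{U} \setminus \Z$; smoothness of the bundle morphism $\xi$ extends $f$ smoothly across $\Z$, where it vanishes.

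For (ii), a direct computation using $X = \partial_x$ and $Y = f\partial_z$ gives $\mathrm{ad}_X^k Y = (\partial_x^k f)\, \partial_z$ for every $k \geq 0$. Consequently, the Lie algebra at $(0, 0)$ generated by $\{X, Y\}$ is spanned by $\partial_x$ together with the vectors $(\partial_x^k f)(0, 0)\, \partial_z$ for $k \geq 0$, and the bracket-generating step being equal to $s$ is equivalent to $(\partial_x^k f)(0,0) = 0$ for $k < s - 1$ and $(\partial_x^{s-1} f)(0,0) \neq 0$, which is precisely \eqref{eq:vanishing_cond}. The main obstacle in this plan is the sub-Riemannian Gauss identity in the presence of singular points: $g$ is degenerate on $\Z$, so one must first establish the identity on the open dense Riemannian region via the Hamiltonian computation, and then invoke smoothness to conclude that the form $Y = f\partial_z$ persists across $\Z$.
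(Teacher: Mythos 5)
Your overall plan — Fermi-type coordinates via the almost-Riemannian exponential map from a transversal curve, a sub-Riemannian Gauss lemma, and the bracket count — is the right one and parallels the proof of the paper's higher-dimensional Lemma~\ref{lem:norm_coord}. The one step that does not go through as written is ``smoothness of the bundle morphism $\xi$ extends $f$ smoothly across $\Z$'': this is a non-sequitur, because $\xi$ degenerates on $\Z$, and smoothness of a bundle morphism says nothing about the smoothness of its fiberwise inverse (which is what you implicitly need in order to promote $\partial_x$ to a unit section of $\mathbb U$ and then take its orthogonal complement). What actually makes the extension work is the following. By Lemma~\ref{lem:exp_W}, in the Fermi coordinates one has $\delta=x$ and $\nabla\delta=G_*\partial_s=\partial_x$. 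Writing any local orthonormal generating frame as $X_i=a_i\partial_x+b_i\partial_z$ with $a_i,b_i$ smooth on all of $\mathcal U$, the eikonal identity $\|\nabla\delta\|^2=1$ and $\nabla\delta=\partial_x$ read $a_0^2+a_1^2=1$ and $a_0b_0+a_1b_1=0$ on $\mathcal U\setminus\Z$, hence on all of $\mathcal U$ by continuity of the smooth functions involved. The pointwise rotation $X:=a_0X_0+a_1X_1$, $Y:=-a_1X_0+a_0X_1$ is then a smooth orthonormal generating frame, and a direct computation gives $X=\partial_x$ and $Y=(a_0b_1-a_1b_0)\partial_z$; thus $f=a_0b_1-a_1b_0$ is the determinant of the reference frame, which is smooth and vanishes exactly on $\Z$. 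This is the argument your ``smoothness of $\xi$'' phrase is standing in for, and it should be spelled out.

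A smaller imprecision concerns part (ii). The identity $\mathrm{ad}_X^kY=(\partial_x^k f)\,\partial_z$ is correct, but the Lie algebra generated by $\{X,Y\}$ also contains iterated brackets with two or more $Y$-factors, and your sentence ``the Lie algebra at $(0,0)$ is spanned by $\partial_x$ together with $(\partial_x^k f)(0,0)\partial_z$'' silently discards them. To conclude that the step is detected by $\mathrm{ad}_X^kY$ alone, one still has to check that, under $f(0,0)=\cdots=\partial_x^{s-2}f(0,0)=0$, every such bracket of length at most $s$ has vanishing $\partial_z$-component at the origin; this holds because the corresponding coefficients are differential polynomials in $f$ whose monomials always contain a pure $x$-derivative $\partial_x^jf$ of order $j\le s-2$, but it is a verification, not an immediate consequence of the displayed formula.
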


\begin{rmk}
This Lemma improves Lemma \ref{lem:norm_coord} since we can give additional condition on the function $f(x,z)=\det A(x,z)$, using the H\"ormander condition. 
\end{rmk}

In the $2$-dimensional case, assumption \eqref{eq:cond_measure} is always verified, see \cite[Thm.\ 9.14]{ABB-srgeom}. For what concerns assumption \eqref{eq:cond_hypsurf}, we have the following lemma.

\begin{lem}
\label{lem:normal_surf}
Let $M$ be a $2$-dimensional \ariem manifold and let $q_0\in\Z$. Consider the curve in normal coordinates $\surf=\{z=0\}$. Then, up to restricting the chart,  $\surf\cap\Z=C(\surf)=\{(0,0)\}$.
\end{lem}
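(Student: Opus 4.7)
The plan is to work directly in the normal coordinates $(x,z)$ provided by Lemma~\ref{lem:2dim_coord}, in which the orthonormal frame has the form $X=\partial_x$, $Y=f(x,z)\partial_z$ for a smooth function $f$. In these coordinates the singular set is characterized by
\begin{equation}
\Z \cap \mathcal{U} = \{(x,z)\in\mathcal{U}\mid f(x,z)=0\},
\end{equation}
so the trace on $\surf=\{z=0\}$ reads $\surf\cap\Z=\{(x,0)\mid \tilde f(x)=0\}$, where $\tilde f(x):=f(x,0)$.

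First I would use the vanishing condition \eqref{eq:vanishing_cond}. Since $q_0\in\Z$, the step $s$ at $q_0$ satisfies $s\geq 2$, and Lemma~\ref{lem:2dim_coord} yields
\begin{equation}
\tilde f(0)=\tilde f'(0)=\cdots=\tilde f^{(s-2)}(0)=0,\qquad \tilde f^{(s-1)}(0)\neq 0.
\end{equation}
By continuity, $\tilde f^{(s-1)}$ is non-zero in a neighborhood of $0$, and Taylor's theorem with Lagrange remainder gives $\tilde f(x)=\tilde f^{(s-1)}(\xi)x^{s-1}/(s-1)!$ for some $\xi$ between $0$ and $x$. Hence, up to shrinking the chart, $\tilde f(x)\neq 0$ for every $x\neq 0$ in a neighborhood of the origin, which shows $\surf\cap\Z=\{(0,0)\}$.

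Second, I would show the equality with $C(\surf)$ by a pointwise check. On $\surf$, the tangent space is $T_{(x,0)}\surf=\spn\{\partial_x\}$, while the distribution is
\begin{equation}
\D_{(x,0)}=\spn\bigl\{\partial_x,\,f(x,0)\partial_z\bigr\}.
\end{equation}
If $f(x,0)\neq 0$ then $\partial_z\in \D_{(x,0)}$, so $\D_{(x,0)}$ has full rank and is not contained in $T_{(x,0)}\surf$, meaning $(x,0)\notin C(\surf)$. Conversely, if $f(x,0)=0$ then $\D_{(x,0)}=\spn\{\partial_x\}=T_{(x,0)}\surf$, so $(x,0)\in C(\surf)$. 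This identifies $C(\surf)$ with $\surf\cap\Z$ inside the chart, and combining with the previous step concludes the proof.

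There is no substantial obstacle here: the argument is essentially a one-variable Taylor expansion together with an elementary characterization of characteristic points through the frame $\{X,Y\}$. The only subtle ingredient is the non-triviality of the jet condition \eqref{eq:vanishing_cond}, which is exactly what the H\"ormander assumption buys us through Lemma~\ref{lem:2dim_coord}.
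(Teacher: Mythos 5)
Your proof is correct, and it is genuinely more elementary than the paper's on the one step that matters. Both arguments share the same skeleton: identify $\surf\cap\Z$ and $C(\surf)$ with the zero set of $\tilde f(x)=f(x,0)$ (you do this by a direct rank check on $\D_{(x,0)}$ versus $T_{(x,0)}\surf$; the paper uses the equivalent characterization $p\in C(\surf)\Leftrightarrow\nabla z(p)=0$), and then show $\tilde f(x)\neq 0$ for small $x\neq 0$ using the jet condition \eqref{eq:vanishing_cond}. The divergence is in that last step: the paper invokes the Malgrange preparation theorem to factor $f(x,z)=r(x,z)\bigl(x^{s-1}+b_{s-2}(z)x^{s-2}+\dots+b_0(z)\bigr)$ and reads off $f(x,0)=r(x,0)x^{s-1}$, whereas you use Taylor's theorem with Lagrange remainder to write $\tilde f(x)=\tilde f^{(s-1)}(\xi)\,x^{s-1}/(s-1)!$ and conclude by continuity of $\tilde f^{(s-1)}$. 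Your route avoids a nontrivial structure theorem; it is simpler and fully sufficient for the lemma as stated. The one thing the Malgrange route buys the paper is the existence of a \emph{smooth} nowhere-vanishing factor $r$ with $f(x,0)=r(x,0)x^{s-1}$, which is reused later in the proof of Theorem~\ref{thm:2dim_cd}. Your Lagrange-remainder version does not by itself produce a smooth $r(x)$ (the intermediate point $\xi$ depends on $x$ in a merely measurable way), so if one wanted to follow your approach consistently throughout, one would replace it there by the integral form of Taylor's theorem (Hadamard's lemma), which does yield a smooth factor. For the present lemma, no such refinement is needed and your argument stands.
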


\begin{proof}
Recall that if $\surf=\{v=0\}$, for $v\in C^\infty$ with never-vanishing differential, then
\begin{equation}
    p\in C(\surf)\quad\Leftrightarrow\quad \nabla v(p)=0, 
\end{equation}
where $\nabla u$ denotes the horizontal gradient of $v$. In particular, in the normal coordinates given by Lemma \ref{lem:2dim_coord}, the singular region is $\Z\cap\mathcal{U}=\{(x,z)\mid f(x,z)=0\}$, thus setting $v(x,z)=z$, 
\begin{equation}
    p=(x,0)\in C(\surf)\quad\Leftrightarrow\quad f(x,0)=0.
\end{equation}
Since $q_0\in\Z$, then $\dim(\D_{q_0})<2$ and the \ariem structure has step $s\geq 2$ at $q_0$. Thus $f(0,0)=0$ and consequently $p=q_0\in C(\surf)$. On the other hand, if $0<|x|<\varepsilon$, $f(x,0)\neq 0$. Indeed, by the vanishing condition \eqref{eq:vanishing_cond} on $f$, we can expand $f(x,0)$ as a Taylor series at $x=0$, obtaining
\begin{equation}
f(x,0)=\frac{\partial^{s-1}f}{\partial x^{s-1}}(0,0) x^{s-1}+o(x^{s-1}),\qquad\text{as }x\to 0.
\end{equation}
where the leading term is not zero. This implies that there exists a smooth function $r\in C^\infty(-\varepsilon,\varepsilon)$, such that $r(x)\neq 0$, for every $x\in (-\varepsilon,\varepsilon)$ and  $f(x,0)=r(x)x^{s-1}$, which never vanishes on $\surf\cap\mathcal{U}\setminus \{q_0\}$, up to restricting the domain of the chart $\mathcal{U}$.
\end{proof}

Now, thanks to Lemmas \ref{lem:2dim_coord} and \ref{lem:normal_surf}, we can follow the general strategy (cf.\ Section \ref{sec:gen_strategy}) to disprove the $\cd$ condition. First of all, notice that the matrix $A=(f(x,z))$ has only one entry, so the Hamiltonian system is greatly simplified. More precisely, the initial covector \eqref{eq:covector_coord} becomes: 
\begin{equation}
    \lambda(x)=\frac{1}{f(x,0)}dz,\qquad\forall\,x\neq 0.
\end{equation}
Thus, as one can check using \eqref{eq:ham_sys}, \eqref{eq:fh_H} and \eqref{eq:nabla_delta_coord} we have
\begin{equation}
\label{eq:2dim_deltafh}
\begin{split}
    \nabla\delta(x) &=(0,f(x,0)),\qquad\f(x) =\left(-\frac{\partial_x f}{f},f\partial_zf\right)_{|(x,0)},\\
    \h(x) &=\left(\star, -\frac{2(\partial_x f)^2}{f}+f(\partial_zf)^2+ f^2\partial^2_zf\right)_{|(x,0)}.
\end{split}
\end{equation}
Here we have omitted the first component of $\h(x)$, since we will not need it. 

Second of all, we can replace the quantities \eqref{eq:2dim_deltafh} in the matrix $B_q(s)$, defined in \eqref{eq:def_Bq}. After a long but routine computation, we obtain the following expression for the logarithmic second derivative of $\det B_q(s)$ at $s=0$, namely
\begin{equation}
    \left(\log \det B_x(s)\right)''(0)=\left(f\partial^2_z f+\frac{(\partial_x f)^2-f\partial_x^2f}{f^2}\right)_{|(x,0)}.
\end{equation}

We are in position to prove the main result of this section. 

\begin{thm}
\label{thm:2dim_cd}
Let $M$ be a complete $2$-dimensional \ariem manifold and let $\m$ be any smooth positive measure on $M$. Then, the metric measure space $(M,\di,\m)$ does not satisfy the $\cd(K,N)$ condition for any $K\in\R$ and $N\in (1,+\infty)$. 
\end{thm}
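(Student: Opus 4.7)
The strategy is to apply the general framework of Section \ref{sec:gen_strategy} with the hypersurface $\surf=\{z=0\}$ given by the normal coordinates of Lemma \ref{lem:2dim_coord} at an arbitrary singular point $q_0\in\Z$. Assumption \eqref{eq:cond_hypsurf} is ensured by Lemma \ref{lem:normal_surf}, which gives $\surf\cap\Z=C(\surf)=\{(0,0)\}$, while \eqref{eq:cond_measure} is automatic for every smooth measure on a $2$-dimensional \ariem manifold by \cite[Thm.\ 9.14]{ABB-srgeom}. Consequently, Lemma \ref{lem:enb} yields the essentially non-branching property and the full machinery of Section \ref{sec:gen_strategy} becomes available: we disintegrate $\m$ along the signed distance $\delta$ from $\surf$, fixing a smooth probability measure $\q$ on $\surf$, and obtain smooth $1$-dimensional marginals $h_q(s)$ on each transport ray whose foot is a Riemannian point $q=(x,0)$, $x\neq 0$.

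For these rays the Hamiltonian computation is the one already performed in the text, giving the explicit formulas \eqref{eq:2dim_deltafh} for $\nabla\delta,\f,\h$ and, after plugging into \eqref{eq:log_derivative}, the closed-form expression
\begin{equation*}
(\log\det B_x(s))''_{|s=0}=\tfrac12\left(-(\partial_z f)^2 + f\,\partial_z^2 f + \frac{(\partial_x f)^2 - f\,\partial_x^2 f}{f^2}\right)_{|(x,0)}.
\end{equation*}
The smooth Radon--Nikodym correction $(\log m(G(s,q)))''_{|s=0}$ stays uniformly bounded as $(x,0)\to(0,0)$, and so does the term $-(\partial_z f)^2+f\,\partial_z^2 f$ evaluated on $z=0$, because $f(x,0)\to 0$ while the $z$-partials of $f$ remain finite. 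The whole question therefore reduces to controlling the ratio
\begin{equation*}
R(x):=\frac{(\partial_x f)^2 - f\,\partial_x^2 f}{f^2}\bigg|_{(x,0)} \qquad\text{as } x\to 0.
\end{equation*}

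To extract the divergence of $R$ we reuse Malgrange preparation, as in the proof of Lemma \ref{lem:normal_surf}: write $f(x,z)=r(x,z)\bigl(x^{s-1}+b_{s-2}(z)x^{s-2}+\ldots+b_0(z)\bigr)$ with $r(0,0)\neq 0$, $b_i(0)=0$, and $s\geq 2$ the step at the origin. Setting $g(x):=r(x,0)$ gives $f(x,0)=g(x)x^{s-1}$, and an elementary expansion cancels the top-order contributions in numerator and denominator, leaving
\begin{equation*}
R(x)=\frac{s-1}{x^2}+\frac{(g'(x))^2-g(x)g''(x)}{g(x)^2},
\end{equation*}
in which the second summand is bounded near $x=0$ because $g(0)=r(0,0)\neq 0$. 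Since $s\geq 2$, this yields $(\log h_q(s))''_{|s=0}\to+\infty$ as $q\to(0,0)$ along $\surf$. For any prescribed $K\in\R$ the inequality $(\log h_q(s))''_{|s=0}>-K$ then holds on a $\q$-positive neighborhood of the origin in $\surf$, which by Lemma \ref{lem:diff_char} contradicts the one-dimensional $\cd(K,N)$ condition on a positive-measure set of rays. By the localization results of Subsection \ref{subsection:onedimCD} this rules out $\cd(K,N)$ for every $K\in\R$ and $N\in(1,+\infty)$.

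The main obstacle is to pinpoint where the characteristic point manifests itself in the formal computation: the transversal covector $\lambda(q)=f(x,0)^{-1}dz$ blows up as $x\to 0$, and this blow-up has to be propagated through the Hamiltonian flow, through the coefficients $\nabla\delta,\f,\h$, and through the Jacobi identity for $\log\det B_x(s)$, in order to isolate the dominant $x^{-2}$ singularity and to verify that none of the smooth compensating terms can cancel it. The two-dimensional setting makes this feasible because $B_x(s)$ is only $2\times 2$ and, by \eqref{eq:2dim_deltafh}, $\nabla\delta$ has a single nontrivial component along each ray, so the singular part of the computation is isolated cleanly.
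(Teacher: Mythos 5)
Your proposal follows the paper's proof of Theorem \ref{thm:2dim_cd} essentially step for step: same choice of $\surf=\{z=0\}$, same appeal to Lemmas \ref{lem:2dim_coord}, \ref{lem:normal_surf}, \ref{lem:enb}, the identical closed form for $(\log\det B_x(s))''_{|s=0}$, and the same use of Malgrange preparation to write $f(x,0)=g(x)x^{s-1}$ and extract the dominant $\frac{s-1}{x^2}$ singularity.

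One claim in your write-up is inaccurate, though harmless: you assert that the correction $(\log m(G(s,q)))''_{|s=0}$ from the smooth density of $\m$ ``stays uniformly bounded as $(x,0)\to(0,0)$.'' It does not. Because $\partial_s^2 G(0,q)=\f(q)$ and the first component of $\f$ equals $-\partial_x f(x,0)/f(x,0)\sim(s-1)/x$, this term blows up like $|x|^{-1}$; the paper explicitly bounds it by $C_1+C_2\bigl|\tfrac{s-1}{x}+\tfrac{\partial_x r(x,0)}{r(x,0)}\bigr|$. What saves the argument is that an $O(x^{-1})$ singularity is still negligible against the $O(x^{-2})$ divergence from $(\log\det B_x(s))''$, so the conclusion $(\log h_q(s))''_{|s=0}\to+\infty$ stands. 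You should state the correct order rather than boundedness, otherwise the reader cannot see that the comparison between the two singularities is genuinely the crux of the step.
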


\begin{proof}
As explained in Section \ref{sec:gen_strategy}, we have to show that the quantity $\left(\log h_x(s)\right)''(0)$ diverges at $+\infty$ as $x\to 0$. Recall that, by the proof of Lemma \ref{lem:normal_surf}, there exists a never-vanishing function $r\in C^\infty(-\varepsilon,\varepsilon)$ such that
\begin{equation}
    f(x,0)=r(x)x^{s-1},\qquad\text{with}\quad f,r\in C^\infty(\mathcal{U}).
\end{equation}
Therefore, using the smoothness of both $f$ and $r$, we deduce that 
\begin{equation}
\label{eq:log_der_matrix}
\begin{split}
     \left(\log \det B_x(s)\right)''(0)&=\frac{(\partial_x f(x,0))^2-f(x,0)\partial_x^2f(x,0)}{f(x,0)^2}+O(1)\\
                                       &=\left(\frac{s-1}{x^2}+\frac{\partial_x r(x)^2-r(x)\partial_x^2r(x)}{r(x)^2}\right)+O(1)\\
                                       &=\frac{s-1}{x^2}+O(1),
\end{split}
\end{equation}

which diverges to $+\infty$ as $x\to 0$, since $(0,0)\in\Z$ and therefore $s>1$. Moreover, let us remark that the singularity is polynomial of order $-2$. We are left to take care of the first term in \eqref{eq:log_derivative}: by a direct computation and using \eqref{eq:smooth_density}, one can check that
\begin{equation}
\label{eq:log_der_densitym}
\begin{split}
   \left| \left(\log m(G(s,x))\right)''(0)\right| &\leq C_0\left(|\partial_sG(0,x)|_e^2+|\partial_s^2G(0,x)|_e\right)\leq C_1 +C_2 \left|\frac{\partial_xf(x,0)}{f(x,0)}\right|\\
   &= C_1 +C_2 \left|\frac{s-1}{x}+\frac{\partial_xr(x)}{r(x)}\right|,
\end{split}
\end{equation}
where $|\cdot|_e$ denotes the Euclidean norm of $\R^2$. Since the singularity in \eqref{eq:log_der_densitym} is polynomial of order $1$, it is negligible compared to the one in \eqref{eq:log_der_matrix}, and we conclude that:
\begin{equation}
    \left(\log \det h_x(s)\right)''(0)\xrightarrow{x\to 0}+\infty,
\end{equation}
disproving the $\cd(K,N)$ condition for any $K\in\R$ and $N\in (0,+\infty)$, as desired.
\end{proof}

\begin{rmk}
A similar argument can be carried out for \emph{generic} $3$-dimensional \ariem manifolds, in the sense of \cite[Def.\ 2]{MR2379474}. Indeed, in this situation we have a convenient description of a local orthonormal frame, and of the matrix $A$, cf. \ \cite[Thm.\ 2]{MR3392620}.
\end{rmk}

\section{Strongly regular almost-Riemannian manifolds do not satisfy \texorpdfstring{$\cd$}{CD}}
\label{sec:strongly_reg}

In this section, we prove that strongly regular \ariem manifolds do not satisfy any curvature-dimension condition. Strongly regular almost-Riemannian manifolds have been studied in \cite{MR3870067, weyl2019}. In this setting, we  can deal with the complexity of the computations thanks to a nice local description of the singularities of the structure. We recall the following definition. 

\begin{defn}
\label{def:strongly_reg}
Let $M$ be a $n$-dimensional \ariem manifold. Assume that the singular set $\Z\subset M$ is an embedded hypersurface without characteristic points. Then, for any $q_0\in\Z$, there exist local coordinates $(x,z)$ centered at $q_0$ such that $\Z=\{x=0\}$ in coordinates, and condition \eqref{eq:loc_frame} is verified, namely a local orthonormal frame for the distribution is given by
\begin{equation}
    X_0=\partial_x,\qquad X_i=\sum_{j=1}^n a_{ij}(x,z)\partial_{z_j}, \quad\forall\,i=1,\ldots,n,
\end{equation}
for some smooth functions $a_{ij}$, so that, denoting by $A=(a_{ij})_{i,j}$, 
\begin{equation}
    \det A(x,z)=0 \qquad\text{if and only if}\qquad x=0.
\end{equation}
We say that $M$ is a \emph{strongly regular} \ariem manifold, if there exists $l\in \mathbb N$ such that 
\begin{equation}
\label{eq:strongly_reg_cond}
    a_{ij}(x,z)=x^l\hat a_{ij}(x,z)\qquad\text{with}\quad\det(\hat a_{ij})(0,z)\neq 0,
\end{equation}
for all $(0,z)$ in the domain of the chart. 
\end{defn}

\begin{rmk}
Although being formulated in coordinates, the notion of a strongly regular \ariem structure on $M$ is intrinsic. In particular, condition \eqref{eq:strongly_reg_cond}, as well as the order $l$, do not depend neither on the choice of $q_0\in\Z$ nor on the coordinates $(x,z)$, see \cite{MR3870067} for further details. 
\end{rmk}

In order to apply our general strategy, we have to ensure that conditions \eqref{eq:cond_hypsurf} and \eqref{eq:cond_measure} are verified. The former is a consequence of the very definition of strongly regular \ariem structure and we pick $\surf$ as in Lemma \ref{lem:nice_hypersurf} so that also the latter condition is satisfied. We proceed by computing the second logarithmic derivative of the one-dimensional densities,
\begin{equation}
\label{eq:expr_ddot_B}
\begin{split}
\left(\log \det B_q(s)\right)''_{|s=0}&=\frac{1}{\beta}\Bigg[\h_n+\beta\partial_x\f_0+\sum_{i=1}^{n-1}\left(\beta\partial_{z_i}\f_i-\beta_i\partial_{z_i}\f_n\right)-2\f_0\partial_x\beta\\
&\quad+2\sum_{i=1}^{n-1}\left(\f_n\partial_{z_i}\beta_i-\f_i\partial_{z_i}\beta\right)+2\sum_{0< i<j< n}\det\begin{pNiceArray}{ccc}[margin]
\partial_{z_i}\beta_i & \partial_{z_j}\beta_i & \beta_i\\
\partial_{z_i}\beta_j & \partial_{z_j}\beta_j & \beta_j\\
\partial_{z_i}\beta   & \partial_{z_j}\beta   & \beta
\end{pNiceArray} \Bigg]\\
&\quad-\frac{1}{\beta^2}\left(\f_n+\sum_{i=1}^{n-1}\left(\partial_{z_i}\beta_i\beta-\beta_i\partial_{z_i}\beta\right)\right)^2,
\end{split}
\end{equation}
where $\beta_i$, $\f_i$, $\h_i$ denote the components of $\nabla\delta$, $\f$ and $\h$ respectively. This computation follows from the trace term in \eqref{eq:log_derivative}, using the property that the first component of $\nabla\delta$ is identically zero, cf.\ \eqref{eq:nabla_delta_coord}.

\begin{thm}
\label{thm:strongly_reg_cd}
Let $M$ be a complete strongly regular \ariem manifold and let $\m$ be any smooth positive measure on $M$. Then, the metric measure space $(M,\di,\m)$ does not satisfy the $\cd(K,N)$ condition for any $K\in\R$ and $N\in (1,+\infty)$. 
\end{thm}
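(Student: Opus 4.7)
The plan is to apply the general scheme of Section~\ref{sec:gen_strategy}. Fix any singular point $p_0\in\Z$. By Definition~\ref{def:strongly_reg}, we can choose coordinates $(x,z)$ centered at $p_0$ in which $\Z=\{x=0\}$ and $a_{ij}(x,z)=x^l\hat a_{ij}(x,z)$ with $\det\hat A(0,z)\neq 0$. Assumption~\eqref{eq:cond_measure} follows trivially, because $\Z$ is a smooth embedded hypersurface. After an orthogonal change of the $z$-coordinates provided by Lemma~\ref{lem:nice_hypersurf}, we may assume $\Sigma=\{z_n=0\}$ has a characteristic point at $p_0$; the factorization then yields that $\det A$ vanishes on $\Sigma$ exactly on $\{(0,\zero)\}$, so assumption~\eqref{eq:cond_hypsurf} is verified.

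Next I track the scaling in $x$ of the quantities appearing in \eqref{eq:expr_ddot_B} along the curve $q=(x,\zero)$. Writing $\beta(q)=x^l\hat\beta(q)$ with $\hat\beta^2=\sum_k\hat a_{kn}^2$, the non-degeneracy of $\hat A(0,\zero)$ forces $\hat\beta(0,\zero)>0$, so $\beta\sim x^l$, $\partial_x\beta/\beta=l/x+O(1)$, and by \eqref{eq:nabla_delta_coord} every component $\beta_i$ of $\nabla\delta$ scales like $x^l$. Integrating the Hamiltonian system \eqref{eq:ham_sys}, one obtains the crucial singular quantity
\begin{equation}
\f_0(q)=\ddot x^q(0)=-\tfrac{1}{2}\beta^{-2}\partial_x(A^\intercal A)_{nn}\big|_q=-\frac{l}{x}+O(1),
\end{equation}
while the remaining components of $\f$ and $\h$ can be computed from Appendix~\ref{app:ham_sys} as rational functions of $x$ with coefficients smooth in $\hat a_{ij}$ and its derivatives. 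A direct check shows that $\f_i=O(x^{2l})$ for $i=1,\ldots,n$, so the quadratic term $\beta^{-2}(\f_n+\cdots)^2$ in \eqref{eq:expr_ddot_B} is $O(x^{2l})$ and hence regular.

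Substituting these expansions into \eqref{eq:expr_ddot_B}, the only potentially singular contributions of order $1/x^2$ come from $\tfrac{1}{2}\partial_x\f_0$, $-\f_0\partial_x\beta/\beta$ and $\h_n/(2\beta)$; every other summand reduces to at most $O(1/x)$. Expanding these three terms via the Hamilton system and using the factorization $a_{ij}=x^l\hat a_{ij}$ together with $\det\hat A(0,\zero)\neq 0$, one obtains
\begin{equation}
\bigl(\log\det B_q(s)\bigr)''\big|_{s=0}=\frac{c_l}{x^2}+O\!\Bigl(\tfrac{1}{x}\Bigr),\qquad c_l>0,
\end{equation}
as $x\to 0$. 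Finally, as in \eqref{eq:log_der_densitym}, the contribution of the smooth density $m(G(s,q))$ to $(\log h_q(s))''(0)$ is bounded by $|\partial_sG(0,q)|_e^2+|\partial_s^2G(0,q)|_e=O(1/x)$, and is therefore subleading. Hence $(\log h_q(s))''(0)\to +\infty$ as $q=(x,\zero)\to(0,\zero)$, and Lemma~\ref{lem:diff_char} rules out $\cd(K,N)$ for every $K\in\R$ and $N\in(1,+\infty)$.

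The hard part is the algebraic bookkeeping required to extract the leading $1/x^2$ coefficient from \eqref{eq:expr_ddot_B}: the three potentially singular summands $\tfrac{1}{2}\partial_x\f_0$, $-\f_0\partial_x\beta/\beta$ and $\h_n/(2\beta)$ have opposite signs, and one has to verify that they do not conspire to cancel and that the resulting coefficient $c_l$ is strictly positive. The rigid factorization $a_{ij}=x^l\hat a_{ij}$, combined with the invertibility of $\hat A(0,\zero)$, is what prevents such a cancellation, and the verification is most effectively carried out via the symbolic algorithm implemented in \cite{script}.
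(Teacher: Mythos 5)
Your strategy coincides with the paper's: same normal coordinates, same assumptions (H1)--(H2), same scaling analysis in $x$, and the same identification of $\tfrac12\partial_x\f_0$, $-\f_0\partial_x\beta/\beta$ and $\h_n/(2\beta)$ as the only $O(1/x^2)$ contributions in \eqref{eq:expr_ddot_B}. However, there is a genuine gap where you write ``one obtains $(\log\det B_q(s))''|_{s=0}=c_l/x^2+O(1/x)$ with $c_l>0$'' and then defer the verification of $c_l>0$ to the Mathematica script. The positivity of the leading coefficient is precisely the crux of the theorem --- as the paper's closing remark of Section~\ref{sec:strongly_reg} points out, divergence of $(\log h_q)''(0)$ is automatic for any \ariem structure, but determining the \emph{sign} of the leading order is exactly what the strong-regularity hypothesis buys, and it must be checked, not asserted.

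The paper does carry out this check by hand, and it is worth seeing why it works. Using the explicit formula \eqref{eq:h_n}, the only $O(x^{l-2})$ term in $\h_n$ is $-\frac{(\partial_x\beta^2)^2}{2\beta^3}=-\frac{2(\partial_x\beta)^2}{\beta}$; all remaining terms involve only $z$-derivatives of $\alpha_i$, $\beta^2$ and are $O(x^{3l})$ because strong regularity makes $z$-differentiation preserve the order in $x$. Hence
\begin{equation}
\frac{\h_n}{2\beta}=-\frac{(\partial_x\beta)^2}{\beta^2}+O(x^{2l}),
\qquad
-\frac{\f_0\partial_x\beta}{\beta}=+\frac{(\partial_x\beta)^2}{\beta^2},
\end{equation}
so your two ``dangerous'' terms cancel \emph{exactly} at leading order, and what survives at order $1/x^2$ is only
\begin{equation}
\frac12\partial_x\f_0=\frac12\left(\frac{(\partial_x\beta)^2}{\beta^2}-\frac{\partial_x^2\beta}{\beta}\right)
=\frac{l^2-l(l-1)}{2x^2}+O\!\left(\frac1x\right)=\frac{l}{2x^2}+O\!\left(\frac1x\right),
\end{equation}
upon substituting $\beta=x^l\hat\beta^{1/2}$ with $\hat\beta(0,\zero)\neq0$. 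Thus $c_l=l/2>0$ and the cancellation you worried about actually simplifies, rather than jeopardizes, the argument. Without performing this step (or at least invoking the computed form of $\h_n$ from Appendix~\ref{app:ham_sys}), your proof is incomplete. A small additional inaccuracy: $\det A$ vanishes on $\Sigma\cap\{x=0\}$, an $(n-1)$-dimensional set, not just at $(0,\zero)$; this is harmless because (H1) only requires $\Sigma\cap\Z\subset\{x=0\}$, which is automatic since $\Z=\{x=0\}$, while Lemma~\ref{lem:nice_hypersurf} supplies the characteristic point at the origin.
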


\begin{proof}
As in the proof of Theorem \ref{thm:2dim_cd}, we have to show that the quantity $\left(\log h_q(s)\right)''(0)$ diverges at $+\infty$ as $q\to 0$ along $\surf$. To do that, the idea is to highlight the most singular terms in $x$, namely those where a derivative in $x$ appears. Let us discuss the order in $x$ of the quantities in \eqref{eq:expr_ddot_B}, using as well formulas from Appendix \ref{app:ham_sys}. Firstly, since $M$ is strongly regular, \eqref{eq:strongly_reg_cond} holds and we have 
\begin{equation}
\label{eq:beta_strongly_reg}
    \beta(x,z)^2=\sum_{i=1}^n a_{kn}^2(x,z)=x^{2l}\sum_{i=1}^n \hat a_{kn}^2(x,z)=x^{2l}\hat\beta(x,z)^2,
\end{equation}
with $\hat\beta(0,z)\neq 0$. Thus, $\beta$ has order $l$ in $x$. Similarly, the components $\beta_i$ of $\nabla\delta$ are given by \eqref{eq:beta_i},
\begin{equation}
    \beta_i(x,z)=\frac{\alpha_i(x,z)}{\beta(x,z)}=\frac{1}{\beta(x,z)}\sum_{k=1}^na_{ki}(x,z)a_{kn}(x,z)=\frac{ x^l}{\hat\beta(x,z)}\sum_{k=1}^n\hat a_{ki}(x,z)\hat a_{kn}(x,z).
\end{equation}
Therefore, also $\beta_i$'s have order $l$ in $x$. A crucial remark before moving forward is that, thanks to the strongly regular assumption on $M$, computing derivatives along $z$-directions does not change the order in $x$ of the quantities. Thus, for example, 
\begin{equation}
    \ord_x \partial_{z_j}\beta_i(x,z)=l,\qquad \forall\,i,j=1,\ldots,n.
\end{equation}
Reasoning in this way, for the functions $\f_i$ defined in \eqref{eq:f_i}, we have: 
\begin{equation}
\ord_x \f_i(x,z)=2l,\qquad\forall\,i=1,\ldots,n,
\end{equation}
and the same is true for any derivative in $z$-directions. For what concerns $\f_0$, recall that
\begin{equation}
\label{eq:ord_f_0}
    f_0(x,z)=-\frac{\partial_x\beta(x,z)}{\beta(x,z)}\qquad\Rightarrow\qquad\ord_x \f_0(x,z)=-1.
\end{equation}
From \eqref{eq:ord_f_0}, it is clear that derivatives in the $x$-direction encode all the possible singularities of second logarithmic derivatives of $h_q(s)$. Finally, using \eqref{eq:h_n}, we see that 
\begin{equation}
    \ord_x\h_n(x,z)=l-2,\qquad\text{and}\qquad\h_n(x,z)=-\frac{2(\partial_x\beta(x,z))^2}{\beta(x,z)}+O(x^{3l}).
\end{equation}
Finally, we can evaluate the order in $x$ of the functions in \eqref{eq:expr_ddot_B}: the lowest order is $-2$ coming from the terms $\h_n\beta^{-1}$, $\partial_x\f_0$ and $\beta^{-1}\f_0\partial_x\beta$. Thus, denoting by $z'=(z_1,\ldots,z_{n-1},0)$, we obtain:
\begin{equation}
\begin{split}
   (\log\det B_q(s))''_{|s=0} &=\left(\frac{\h_n(x,z')-2\f_0\partial_x\beta(x,z')}{\beta(x,z')}+\partial_x\f_0(x,z')\right)+O(1)\\
   &=\left(\frac{-\partial^2_x\beta(x,z')\beta(x,z')+(\partial_x\beta(x,z'))^2}{\beta^2(x,z')}\right)+O(1).
\end{split}
\end{equation}

Now using \eqref{eq:beta_strongly_reg}, we can reason as in the $2$-dimensional case, cf.\ \eqref{eq:log_der_matrix}, to conclude that
\begin{equation}
     \left(\log \det B_q(s)\right)''(0)\xrightarrow{q\to (0,\zero)}+\infty.
\end{equation} 
Once again, also in this situation, the singularity in $x$ of the quantity $ \left(\log \det B_q(s)\right)''(0)$ is polynomial of order $-2$. Finally, using the same argument used in \eqref{eq:log_der_densitym} for the $2$-dimensional case, we can show that the density of the measure $\m$ produces a polynomial singularity of order $-1$, which is negligible as $q\to(0,\zero)$. Finally, we obtain
\begin{equation}
    \left(\log \det h_q(s)\right)''(0)\xrightarrow{q\to (0,\zero)}+\infty,
\end{equation}
disproving the $\cd(K,N)$ condition for any $K\in\R$ and $N\in (1,+\infty)$, as desired. 
\end{proof}

\begin{rmk}
We stress once again that, thanks to the strongly regular assumption on $M$, the order of the structure (and thus the order of $\beta$) controls the orders in $x$, not only of the functions $\beta_i$, $\f_i$ and $\h_n$, but also of their derivatives in the $z$-directions. Below, we provide an example of regular (but not strongly regular) structure where the orders of the derivatives are not controlled by the order of $\beta$. Nevertheless our strategy to disprove the $\cd$ condition works. 

In full generality, it is possible to prove that $ \left(\log \det h_x(s)\right)''(0)$ actually diverges, however there is no criterion of determining the \emph{sign} of the leading order, without requiring some additional regularity on the structure. On the other hand, as characteristic points encode the truly \sr behavior of \ariem manifolds, we believe that our strategy should always be effective.
\end{rmk}

\begin{example}
Let $M=\R^4$ and in coordinates $(x,z_1,z_2,z_3)$ consider the \ariem structure defined by the global vector fields
\begin{equation}
    X_0=\partial_x,\quad X_1=\partial_{z_1}-\frac{z_2}{2}\partial_{z_3},\quad X_2=\partial_{z_2}+\frac{z_1}{2}\partial_{z_3},\quad X_3=x\partial_{z_3}.
\end{equation}
The singular region is given by $\Z=\{x=0\}$ and is an embedded hypersurface without characteristic points. Notice that $M$ is regular, see \cite[Def.\ 7.10]{MR3870067} for the precise definition, but not strongly regular, thus we can not apply Theorem \ref{thm:strongly_reg_cd}. Nevertheless, if we consider $\surf=\{z_3=0\}$, assumptions \eqref{eq:cond_hypsurf} and \eqref{eq:cond_measure} are verified, therefore, we can apply our general strategy. Setting $\m=\Leb^4$, an explicit computation leads to 
\begin{equation}
\label{eq:monito}
     \left(\log \det h_q(s)\right)''(0)=\frac{8x^2-4(z_1^2+z_2^2)}{(4x^2+z_1^2+z_2^2)^2},
\end{equation}
which diverges at $+\infty$ along the curve $(x,0,0,0)$ as $x\to 0$, disproving the $\cd(K,N)$ condition for any $K\in\R$ and $N\geq 1$. A few remarks are in order: first of all, the function $\beta(x,z_1,z_2)=4x^2+z_1^2+z_2^2$ has order $2$ in $x$ but this is not true for its derivatives in the $z$-directions. Second of all, the numerator of \eqref{eq:monito} does not have a sign, highlighting the difficulties of the general case of determining the behavior of the leading term.
\end{example}

\appendix

\section{Explicit expression for \texorpdfstring{$\nabla\delta$, $\f$ and $\h$}{grad(delta), f, h}}
\label{app:ham_sys}
In order to obtain an explicit expression for $\nabla\delta$, $\f$ and $\h$, we study the Hamiltonian system associated with the \sr Hamiltonian. Recall that in canonical coordinates induced by $(x,z)$, given by \eqref{eq:loc_frame}, the Hamiltonian is
\begin{equation}
H(p_x,p_z;x,z)=\frac{1}{2}p_x^2+\frac{1}{2}p_z^\intercal A^\intercal A(x,z)p_z,
\end{equation}
where $p_z$ is a shorthand for $(p_{z_1},\ldots,p_{z_n})$. For the hypersurface $\surf\subset M$, given by \eqref{eq:cond_hypsurf}, from Lemma \ref{lem:exp_sigma}, we know that the unique minimizing geodesic for $\delta$ with initial point $q\in\surf\setminus C(\surf)$ has unique (up to a sign) initial covector: 
\begin{equation}
\lambda(q)=\frac{1}{\beta(q)}dz_n,\qquad\text{with}\quad\beta(q)^2=\sum_{k=1}^n a_{kn}(q)^2
\end{equation}
Thus, if $(x(s),z(s),p_x(s),p_z(s))$\footnote{We drop the superscript $q$ only in this section to ease the notation.} is the solution to \eqref{eq:ham_sys} with initial datum $(\lambda(q);q)$, we deduce that 
\begin{equation}
\label{eq:beta_i}
    \beta_0(q)=\dot x(0)=0,\qquad\beta_i(q)=\dot z_i(0)=\frac{1}{\beta(q)}\sum_{k=1}^n a_{ki}(q)a_{kn}(q)=\frac{\alpha_i(q)}{\beta(q)},\quad\forall\,i=1,\ldots,n,
\end{equation}
having denoted $\nabla\delta(q)=\left(\beta_0(q),\ldots,\beta_n(q)\right)$. Moreover, notice that by definition $\beta_n(q)=\beta(q)$. In an analogous way, we can compute $\f=\left(\f_0,\ldots,\f_n\right)$:
\begin{equation}
\label{eq:f_i}
\begin{split}
\f_0(q)&=\ddot x(0)=-\frac{\partial_x \beta(q)}{\beta(q)}\\
\f_i(q)&=\ddot z_i(0)=\frac{1}{\beta^2(q)}\left[\sum_{l=1}^n\partial_{z_l}\alpha_i(q)\alpha_l(q)-\frac{1}{2}\sum_{j,k=1}^n a_{ki}(q)a_{kj}(q)\partial_{z_j}\beta^2(q)\right].
\end{split}
\end{equation}
Finally, taking the third-order derivatives in $s$ of the solution to \eqref{eq:ham_sys}, we obtain $\h$. Notice, however, that we only need the $n$-th component of $\h$ in \eqref{eq:expr_ddot_B}, thus: 
\begin{multline}
\label{eq:h_n}
\h_n =\dddot{z}_n(0) =\frac{1}{\beta^3}\Bigg[-\frac{(\partial_x \beta^2)^2}{2}+\sum_{j,r,l=1}^n\alpha_l\alpha_r\partial^2_{z_lz_r}(\beta^2)+\sum_{j,l=1}^n\beta^2\partial_{z_l}(\beta^2)\f_l\\					 -\sum_{j,l=1}^n\alpha_l\partial_{z_l}\alpha_j\partial_{z_j}(\beta^2)-\frac{1}{2}\sum_{j,l=1}^n \alpha_j\left( \alpha_l\partial_{z_jz_l}(\beta^2)- \partial_{z_j}\alpha_l\partial_{z_l}(\beta^2)\right)\Bigg].		
\end{multline}

\bibliographystyle{alphaabbr}
\bibliography{biblio-cd-asr}

\end{document}